\newtheorem{definition}{Definition}
\newtheorem{proposition}{Proposition}
\newtheorem{theorem}{Theorem}
\newtheorem{lemma}{Lemma}
\newtheorem{corollary}{Corollary}
\theoremstyle{remark}
\newtheorem{remark}{Remark}
\newtheorem{conjecture}{Conjecture}
\title{On structure of homogenenous Wick ideals in Wick $*$-algebras with braided coefficients}
\author{Vasyl Ostrovskyi$^\text{(a)}$
\and Danil Proskurin$^\text{(b)}$
\and Yurii Savchuk$^\text{(c)}$
\and Lyudmila Turowska$^\text{(d)}$}
\date{
\footnotetext{$^\text{(a)}$ Institute of Mathematics, NAS of Ukraine, Ukraine, \texttt{vo@imath.kiev.ua}}
\footnotetext{$^\text{(b)}$ Kyiv Taras Shevchenko University, Ukraine, \texttt{prosk@univ.kiev.ua}}
\footnotetext{$^\text{(c)}$ Leipzig University, Germany, \texttt{Yuriy.Savchuk@math.uni-leipzig.de}}
\footnotetext{$^\text{(d)}$ Chalmers University of Technology and University of Gothenburg, Sweden, \texttt{turowska@chalmers.se}}
}
\begin{document}
\maketitle
\section{Introduction}

In this paper we present some results on structure of Wick homogenenous ideals of quadratic algebras allowing Wick ordering, shortly Wick algerbas, introduced in  \cite{jsw}. Namely, let $\{T_{ij}^{kl},\ i,j,k,l=1,\ldots,d\}\subset\mathbb{C}$ satisfy conditions $T_{ji}^{lk}=\overline{T}_{ij}^{kl}$, then Wick algebra $W(T)$ is generated by $a_i$, $a_i^*$, $i=1,\ldots,d$,
satisfying commutation relations of the form
\begin{equation}\label{wick}
a_i^*a_j=\delta_{ij}1+\sum_{k,l=1}^d T_{ij}^{kl}a_la_k^*,\ i,j=1,\ldots,d.
\end{equation}

Following \cite{jsw} consider finite-dimensional Hilbert space
$\mathcal{H}=\mathbb{C}\langle e_1,\cdots,e_d\rangle$ and its formal dual
$\mathcal{H}^*=\mathbb{C}\langle e_1^*,\cdots,e_d^*\rangle$, where $\{e_i,\ i=1,\ldots,d\}$ form an orthonormal base of $\mathcal{H}$. Put
$\mathcal{T}(\mathcal{H},\mathcal{H}^*)$ to be the full tensor algebra over $\mathcal{H}$ and $\mathcal{H^*}$, then
\begin{equation}\label{canonic}
W(T)\simeq \mathcal{T}(\mathcal{H},\mathcal{H}^*)/\langle e_i^*\otimes e_j-\sum_{k,l=1}^d T_{ij}^{kl} e_l\otimes e_k^*\rangle.
\end{equation}
Note, that in this realisation the free algebra generated by $a_i$, $i=1,\ldots,d$ coincides with $\mathcal{T}(\mathcal{H})=
\mathbb{C}\Omega\oplus\bigoplus_{n\in\mathbb{N}}\mathcal{H}^{\otimes n}$.

The Fock representation of $W(T)$ is defined on $\mathcal{T}(\mathcal{H})$ by the rules
\[
a_i^*\Omega=0,\quad a_i e_{i_1}\otimes\cdots \otimes e_{i_k}=e_i\otimes e_{i_1}\otimes\cdots \otimes e_{i_k},\ i=1,\ldots,d,
\]
the action of $a_i^*$, $i=1,\ldots, d$, on vectors other than $\Omega$, is determined inductively using the commutation relation in  $W(T)$.
It was proved in \cite{jsw} that there exists a unique sesquilinear form $\langle\cdot,\cdot\rangle_F$,
called the {\it Fock scalar product},
on $\mathcal{T}(\mathcal{H})$,  such that the Fock representation becomes a $*$-representation with respect to this form. It is defined in such a way that the subspaces $\mathcal{H}^{\otimes n}$ and  $\mathcal{H}^{\otimes m}$ are orthogonal if $m\ne n$ and
\[
\langle X,Y\rangle_F=\langle X, P_n Y\rangle,\quad X,Y\in\mathcal{H}^{\otimes n}.
\]
where by $\langle\cdot,\cdot\rangle$ we denote the standard scalar product on $\mathcal{H}^{\otimes n}$ and
$P_n\colon\mathcal{H}^{\otimes n}\rightarrow
\mathcal{H}^{\otimes n}$ is an operator defined in  the following way  (see \cite{jsw}):
First we introduce an operator
$T\colon\mathcal{H}^{\otimes 2}\rightarrow\mathcal{H}^{\otimes 2}$ given by
\begin{equation}\label{opert}
T e_{k}\otimes e_{l} =
\sum_{i,j=1}^d T_{ik}^{lj}e_{i}\otimes e_{j}.
\end{equation}
Note that $T$ is self-adjoint with respect to the standard scalar product on $\mathcal{H}^{\otimes 2}$. Further, for any $n>2$ consider the following extensions of $T$ to $\mathcal{H}^{\otimes n}$:
\[
T_i=\bigotimes_{k=1}^{i-1}\mathbf{1}_{\mathcal{H}}\otimes T\otimes
\bigotimes_{k=i+2}^n\mathbf{1}_{\mathcal{H}},\quad i=1,\ldots,n-1.
\]
Then we set $P_0=1$, $P_1=\mathbf{1}_{\mathcal{H}}$, $P_2=\mathbf{1}_{\mathcal{H}^{\otimes 2}}+T$ and
\begin{equation}\label{pn}
P_{n}=(\mathbf{1}_{\mathcal{H}}\otimes P_{n-1})R_n,\ n\ge 3,
\end{equation}
where
\[
R_n\colon\mathcal{H}^{\otimes n}\rightarrow\mathcal{H}^{\otimes n},\quad R_n=\mathbf{1}_{\mathcal{H}^{\otimes n}}+T_1+T_1T_2+\cdots+T_1T_2\cdots T_{n-1}.
\]
\begin{remark}
The operators $R_n$, $n\ge 2$, are used to obtain explicit formulas
for commutation relations between generators $a_i^*$, $i=1,\ldots,d$
and  homogeneous polynomial in noncommutative variables $a_1,\ldots,
a_d$. Namely, by \cite{proams}, for $X\in\mathcal{H}^n$ one has the
following  equality in $W(T)$ (here we use the canonical
realisation)
\[
e_i^*\otimes X=\mu_0(e_i^*)(R_n X+\sum_{k=1}^d T_1T_2\cdots T_n (X\otimes e_k)\otimes e_k^*),
\]
where $\mu_0(e_i^*):\mathcal T(\mathcal H)\to \mathcal T(\mathcal H)$ is given by
\[
\mu_0(e_i^*)e_{i_1}\otimes e_{i_2}\otimes\cdots e_{i_s}=\delta_{ii_1}e_{i_2}\otimes\cdots e_{i_s},\ s\ge 1,\quad \mu_0(e_i^*)\Omega =0.
\]
This allows to determine explicitly  the action of $a_i^*$ in the Fock representation as follows
\[
a_i^* X=\mu_0(e_i^*)R_n X,\quad X\in\mathcal{H}^{\otimes n}.
\]
\end{remark}

Positivity of the Fock scalar product means that  $P_n\geq 0$ for all $n\ge 2$. In this case the Fock representation can be extended to a  $*$-representation
of $W(T)$ on a Hilbert space, which is a completion of $\mathcal{T}(\mathcal{H})/\bigoplus_{n\ge 2}\ker P_n$ with respect to the norm defined by the Fock scalar product. Sufficient conditions for positivity of family $\{P_n,\ n\ge 2\}$ can be found in \cite{bs,jps, jsw}.
For instance if  $T$ is {\it braided}, i.e. $T_1T_2T_1=T_2T_1T_2$ on  $\mathcal{H}^{\otimes 3}$,  and $||T||\le 1$, then by \cite{bs} $P_n\ge 0$, $n\ge 2$. Moreover in this case for any $n\ge 2$
\[
\ker P_n=\sum_{i=1}^{n-1}\ker (\mathbf{1}_{\mathcal{H}^{\otimes n}}+T_i)
\]
and the kernel of the Fock representation is generated as a two-sided $*$-ideal by  $\ker (\mathbf{1}_{\mathcal{H}^{\otimes 2}}+T)$, see \cite{jps}.
Furthermore, if
 $T$ is braided and $\ker (\mathbf{1}_{\mathcal{H}^{\otimes 2}}+T)\ne \{0\}$, the two-sided ideal $\mathcal{I}_2\subset\mathcal{T}(\mathcal{H})$ generated by $\ker (\mathbf{1}_{\mathcal{H}^{\otimes 2}}+T)$ is invariant with respect to multiplication by any $a_i^*$, $i=1,\ldots,d$. i.e.
\begin{equation}\label{ideal}
e_i^*\otimes\mathcal{I}_2\subset\mathcal{I}_2+\mathcal{I}_2\otimes\mathcal{H}^{*}
\end{equation}
Ideals  $I\subset\mathcal{T}(\mathcal{H})$ satisfying (\ref{ideal}) are called {\it Wick ideals}, see \cite{jsw}. It was shown that homogeneous Wick ideals, i.e. those ones which are generated by subspaces in $\mathcal{H}^{\otimes n}$, are annihilated by the Fock representation, see \cite{jsw}. In \cite{jps} the authors prove that if the operator $T$ is braided then
 existence of homogeneous Wick ideals is necessary for existence of Wick ideals in general. If $T$ is a braided contraction, then any homogeneous Wick ideal of higher degree is contained in a  largest quadratic one, see \cite{jps}. Note that for some Wick algebras (e.g. Wick algebras associated with  twisted canonical commutation relations of W. Pusz and S.L. Woronowicz, see \cite{jsw,pw}; quonic commutation relations, see \cite{mar} and others) their quadratic Wick ideals are contained in their $*$-radicals, i.e. such ideals are annihilated by any bounded $*$-representation of the corresponding algebra.

In this paper we investigate the structure of homogeneous Wick ideals of higher degrees. We present a  method how to construct a homogeneous Wick ideal
$\mathcal{I}_{n+1}$ of degree $n+1$ out of a  homogeneous Wick ideal $\mathcal{I}_n$ of degree $n$ so that $\mathcal{I}_{n+1}\subset\mathcal{I}_n$. We show that in some particular cases our procedure allows to get a description of largest homogeneous Wick ideals of higher degrees having  generators of the largest quadratic Wick ideal only.
Finally we study classes of $*$-representations of Wick version of CCR annihilating certain homogeneous Wick ideals of degree higher than $2$.
\section{Wick ideals: basic definitions and properties.}
The notion of Wick ideal in quadratic Wick algebra was presented in \cite{jsw}. It was proposed as a natural way to introduce additional relations between generators $a_i$, $i=1,\ldots,d$, which are consistent with the basic relations of the algebra.

Following \cite{jsw} we will work with the canonical realisation of $W(T)$ as a  quoitient of the tensor algebra $\mathcal T(\mathcal H,\mathcal H^*)$
given by  (\ref{canonic}). In this realisation the subalgebra generated by $a_i$, $i=1,\ldots,d$, is identified with $\mathcal{T}(\mathcal{H})$.
\begin{definition}\label{defwickid}
A two-sided ideal $\mathcal{I}\subset\mathcal{T}(\mathcal{H})$
is called a Wick ideal if
\[
\mathcal{T}(\mathcal{H}^*)\otimes\mathcal{I}\subset\mathcal{I}\otimes
\mathcal{T}(\mathcal{H}^*).
\]
If the Wick ideal $\mathcal{I}$ is generated by a subspace  $\mathcal{I}_0\subset\mathcal{H}^{\otimes n}$, then  $\mathcal{I}$ is called a homogeneous Wick ideal of degree $n$.
\end{definition}
It is easy to verify the following criteria for a two-sided ideal $\mathcal{I}$ to be a Wick one, see \cite{jsw}.
\begin{proposition}
A two-sided  ideal $\mathcal{I}\subset\mathcal{T}(\mathcal{H)}$ is Wick iff
\[
\mathcal{H}^*\otimes\mathcal{I}\subset\mathcal{I}+\mathcal{I}\otimes\mathcal{H}^*.
\]
\end{proposition}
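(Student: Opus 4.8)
The plan is to prove the two inclusions of the equivalence separately. The implication from $\mathcal H^*\otimes\mathcal I\subseteq\mathcal I+\mathcal I\otimes\mathcal H^*$ to the Wick property is obtained by a short induction on tensor powers of $\mathcal H^*$ using only associativity of the product of $W(T)$; the converse follows by restricting the defining inclusion to the first tensor power of $\mathcal H^*$ and then comparing the number of starred generators by means of the Wick normal form. I expect the only non-formal point to be this last comparison.

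\emph{Sufficiency.} Assume $\mathcal H^*\otimes\mathcal I\subseteq\mathcal I+\mathcal I\otimes\mathcal H^*$. One shows by induction on $m\ge 0$ that $(\mathcal H^*)^{\otimes m}\otimes\mathcal I\subseteq\mathcal I\otimes\mathcal T(\mathcal H^*)$; summing over $m$ then gives $\mathcal T(\mathcal H^*)\otimes\mathcal I\subseteq\mathcal I\otimes\mathcal T(\mathcal H^*)$, i.e.\ $\mathcal I$ is Wick. The cases $m=0$ (trivial) and $m=1$ (the hypothesis) start the induction, and for the step one computes in $W(T)$
\[
(\mathcal H^*)^{\otimes(m+1)}\otimes\mathcal I=\mathcal H^*\otimes\bigl((\mathcal H^*)^{\otimes m}\otimes\mathcal I\bigr)\subseteq(\mathcal H^*\otimes\mathcal I)\otimes\mathcal T(\mathcal H^*),
\]
and then $(\mathcal H^*\otimes\mathcal I)\otimes\mathcal T(\mathcal H^*)\subseteq(\mathcal I+\mathcal I\otimes\mathcal H^*)\otimes\mathcal T(\mathcal H^*)\subseteq\mathcal I\otimes\mathcal T(\mathcal H^*)$, the last inclusion holding because $\mathcal H^*\otimes\mathcal T(\mathcal H^*)\subseteq\mathcal T(\mathcal H^*)$. (This direction does not even use that $\mathcal I$ is two-sided.)

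\emph{Necessity.} Suppose $\mathcal I$ is Wick. Restricting $\mathcal T(\mathcal H^*)\otimes\mathcal I\subseteq\mathcal I\otimes\mathcal T(\mathcal H^*)$ to the first tensor power of $\mathcal H^*$ gives $\mathcal H^*\otimes\mathcal I\subseteq\mathcal I\otimes\mathcal T(\mathcal H^*)$. On the other hand, moving a single $e_i^*$ to the right through a word in $\mathcal H^{\otimes n}$ --- explicitly by the identity recalled in the Remark, or by iterating the relation (\ref{wick}) --- shows $e_i^*\otimes X\in\mathcal H^{\otimes(n-1)}+\mathcal H^{\otimes n}\otimes\mathcal H^*$ for every $X\in\mathcal H^{\otimes n}$, whence $\mathcal H^*\otimes\mathcal T(\mathcal H)\subseteq\mathcal T(\mathcal H)+\mathcal T(\mathcal H)\otimes\mathcal H^*$. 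Thus $\mathcal H^*\otimes\mathcal I$ lies in $\bigl(\mathcal I\otimes\mathcal T(\mathcal H^*)\bigr)\cap\bigl(\mathcal T(\mathcal H)+\mathcal T(\mathcal H)\otimes\mathcal H^*\bigr)$, and it remains to see that this intersection is contained in $\mathcal I+\mathcal I\otimes\mathcal H^*$. For this I would invoke the Wick normal form: the subspaces $\mathcal H^{\otimes n}\otimes(\mathcal H^*)^{\otimes m}$, $n,m\ge 0$, are linearly independent in $W(T)$, so $W(T)=\bigoplus_{n,m\ge 0}\mathcal H^{\otimes n}\otimes(\mathcal H^*)^{\otimes m}$. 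Writing $L_m=\bigoplus_{n\ge 0}\mathcal H^{\otimes n}\otimes(\mathcal H^*)^{\otimes m}$ for the ``$m$-th layer'', one has $\mathcal I\otimes(\mathcal H^*)^{\otimes m}\subseteq L_m$ (its elements are already Wick ordered), so the $L_m$-component of any element of $\mathcal I\otimes\mathcal T(\mathcal H^*)=\sum_{k\ge 0}\mathcal I\otimes(\mathcal H^*)^{\otimes k}$ lies in $\mathcal I\otimes(\mathcal H^*)^{\otimes m}$; while $\mathcal T(\mathcal H)+\mathcal T(\mathcal H)\otimes\mathcal H^*\subseteq L_0\oplus L_1$. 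Matching layers, an element $x$ of the intersection has vanishing $L_m$-component for $m\ge 2$, its $L_0$-component in $\mathcal I$, and its $L_1$-component in $\mathcal I\otimes\mathcal H^*$, hence $x\in\mathcal I+\mathcal I\otimes\mathcal H^*$.

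\emph{Main obstacle.} The induction and the reduction are purely formal manipulations with spans and associativity; the one step drawing on the structure theory of Wick algebras is the layer comparison, namely the identification of $\mathcal I\otimes(\mathcal H^*)^{\otimes m}$ with a piece of a genuine direct-sum decomposition of $W(T)$. This rests on the (standard) linear independence of the Wick-ordered monomials, equivalently on the fact that $W(T)$ really ``allows Wick ordering''; beyond that, nothing in the argument is deep.
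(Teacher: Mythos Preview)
The paper does not actually prove this proposition: it is stated with the remark ``it is easy to verify'' and a reference to \cite{jsw}, so there is no argument in the paper to compare against. Your proposal is correct and is essentially the standard argument one would expect here.

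A couple of minor comments. For the sufficiency direction, your induction is clean and works as written. For the necessity direction, the key input is indeed the Wick normal form: the fact that the monomials $e_{i_1}\otimes\cdots\otimes e_{i_n}\otimes e_{j_1}^*\otimes\cdots\otimes e_{j_m}^*$ are linearly independent in $W(T)$ (equivalently, that $W(T)\simeq\mathcal T(\mathcal H)\otimes\mathcal T(\mathcal H^*)$ as a vector space). This is exactly the ``allowing Wick ordering'' hypothesis built into the definition of Wick algebras in \cite{jsw}, so invoking it is legitimate and not circular. Once that is granted, your layer decomposition $W(T)=\bigoplus_m L_m$ with $L_m=\mathcal T(\mathcal H)\otimes(\mathcal H^*)^{\otimes m}$ is a genuine direct sum, $\mathcal I\otimes(\mathcal H^*)^{\otimes m}\subseteq L_m$ because $\mathcal I\subseteq\mathcal T(\mathcal H)$, and the intersection argument goes through exactly as you wrote it. Your identification of this step as the only non-formal one is accurate.
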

\begin{remark}
If an  ideal $\mathcal{I}\subset\mathcal{T}(\mathcal{H})$ is generated by a  subspace  $\mathcal{I}_0\subset\mathcal{H}^{\otimes n}$, then it is Wick iff
\[
\mathcal{H}^*\otimes\mathcal{I}_0\subset\mathcal{I}_0+
\mathcal{I}_0\otimes\mathcal{H}^*
\]
\end{remark}
It is  important from the representation theory point of view  to get a
 precise description of generators of homogeneous Wick ideals of degrees higher than $2$. The first step in this direction was done in \cite{proams}. Namely, in this paper the following statement was proved.
\begin{proposition}\label{cubide}
Let $T$ be a braided contraction  and let $\mathcal{I}_2\subset\mathcal{H}^{\otimes 2}$ generate the largest quadratic Wick ideal.  Then
\[
\mathcal{I}_3=(\mathbf{1}_{\mathcal{H}^{\otimes 3}}-T_1T_2)(\mathcal{I}_2\otimes\mathcal{H})
\]
generates the largest  Wick ideal of degree 3.
\end{proposition}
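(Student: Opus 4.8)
\noindent\emph{Proof proposal.}\quad The plan is to turn the homogeneous Wick condition into two operator identities, verify them for $\mathcal{I}_3$, and then read off maximality from a degree count. I would begin by extracting a working criterion. By the commutation formula recalled in the Remark above, for $X\in\mathcal{H}^{\otimes n}$ one has in $W(T)$
\[
e_i^*\otimes X=\mu_0(e_i^*)R_nX+\sum_{k=1}^d\bigl(\mu_0(e_i^*)\,T_1T_2\cdots T_n(X\otimes e_k)\bigr)\otimes e_k^* ,
\]
whose first summand lies in $\mathcal{H}^{\otimes(n-1)}$ and whose second lies in $\mathcal{H}^{\otimes n}\otimes\mathcal{H}^*$. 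Comparing $\mathcal{H}$-degrees in the criterion $\mathcal{H}^*\otimes\mathcal{I}_0\subset\mathcal{I}_0+\mathcal{I}_0\otimes\mathcal{H}^*$, a subspace $\mathcal{I}_0\subset\mathcal{H}^{\otimes n}$ generates a Wick ideal if and only if (a) $R_n\mathcal{I}_0=0$ and (b) $T_1T_2\cdots T_n(\mathcal{I}_0\otimes\mathcal{H})\subset\mathcal{H}\otimes\mathcal{I}_0$. For $n=2$, (a) forces $\mathcal{I}_0\subset\ker R_2=\ker(\mathbf{1}_{\mathcal{H}^{\otimes 2}}+T)$, and on $\ker(\mathbf{1}+T)$ condition (b) holds automatically: there $T_1v=-v$, so $(\mathbf{1}+T_2)T_1T_2v=T_1T_2v+T_2T_1T_2v=T_1T_2v+T_1T_2T_1v=0$ by the braid relation; hence $\mathcal{I}_2=\ker(\mathbf{1}_{\mathcal{H}^{\otimes 2}}+T)$, which I use below.

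Next I would verify (a) and (b) for $\mathcal{I}_3=(\mathbf{1}_{\mathcal{H}^{\otimes 3}}-T_1T_2)(\mathcal{I}_2\otimes\mathcal{H})$, noting that $\mathcal{I}_2\otimes\mathcal{H}=\ker(\mathbf{1}+T_1)$. For (a), I would expand $R_3(\mathbf{1}-T_1T_2)=(\mathbf{1}+T_1)-T_1(\mathbf{1}+T_2)T_1T_2$; the first summand kills $\ker(\mathbf{1}+T_1)$ by definition, and so does the second, since for $v\in\ker(\mathbf{1}+T_1)$ one has $T_2T_1T_2v=T_1T_2T_1v=-T_1T_2v$, hence $(\mathbf{1}+T_2)T_1T_2v=0$; thus $R_3\mathcal{I}_3=0$. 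For (b), using the distant commutativity $T_1T_3=T_3T_1$ together with the braid relations on $\mathcal{H}^{\otimes 4}$ I would first establish $T_1T_2T_3(\mathbf{1}-T_1T_2)=(\mathbf{1}-T_2T_3)T_1T_2T_3$, and then observe, exactly as in the case $n=2$, that $T_1T_2T_3$ maps $\mathcal{I}_2\otimes\mathcal{H}^{\otimes 2}=\ker(\mathbf{1}+T_1)$ into $\ker(\mathbf{1}+T_2)=\mathcal{H}\otimes\mathcal{I}_2\otimes\mathcal{H}$. Combining these,
\[
T_1T_2T_3(\mathcal{I}_3\otimes\mathcal{H})=(\mathbf{1}-T_2T_3)\,T_1T_2T_3(\mathcal{I}_2\otimes\mathcal{H}^{\otimes 2})\subset(\mathbf{1}-T_2T_3)(\mathcal{H}\otimes\mathcal{I}_2\otimes\mathcal{H})=\mathcal{H}\otimes\mathcal{I}_3 ,
\]
which is (b). Hence $\mathcal{I}_3$ generates a Wick ideal of degree $3$.

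To finish I would prove maximality. Let $J\subset\mathcal{H}^{\otimes 3}$ generate an arbitrary Wick ideal of degree $3$. By (a), $R_3J=0$; and since $T$ is a braided contraction, the result of \cite{jps} that every homogeneous Wick ideal of higher degree lies inside the largest quadratic one gives $J\subset\mathcal{I}_2\otimes\mathcal{H}+\mathcal{H}\otimes\mathcal{I}_2$. Fix $v\in J$ and write $v=v_A+v_B$ with $v_A\in\mathcal{I}_2\otimes\mathcal{H}$ and $v_B\in\mathcal{H}\otimes\mathcal{I}_2$. Since $T_1v_A=-v_A$, $R_3v_A=v_A+T_1v_A+T_1T_2v_A=T_1T_2v_A$; since $T_2v_B=-v_B$, $R_3v_B=v_B$. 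Hence $0=R_3v=T_1T_2v_A+v_B$, so $v_B=-T_1T_2v_A$ and $v=(\mathbf{1}-T_1T_2)v_A\in\mathcal{I}_3$. Therefore $J\subset\mathcal{I}_3$, and $\mathcal{I}_3$ generates the largest Wick ideal of degree $3$.

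The hard part will be condition (b): one must hit on the right chain of braid moves and distant-commutativity relations to recast $T_1T_2T_3(\mathbf{1}-T_1T_2)$ as $(\mathbf{1}-T_2T_3)T_1T_2T_3$, and to see that $T_1T_2T_3$ intertwines $\ker(\mathbf{1}+T_1)$ with $\ker(\mathbf{1}+T_2)$ on $\mathcal{H}^{\otimes 4}$. Once these combinatorial identities for the braid action are in hand, the computation of $R_3$ on $\mathcal{I}_2\otimes\mathcal{H}$ and the degree argument for maximality are routine, the only external ingredient being the containment-in-quadratic theorem of \cite{jps}.
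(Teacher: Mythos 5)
Your proof is correct, but note that the paper itself contains no proof of Proposition~\ref{cubide}: the statement is quoted from \cite{proams}, and the paper's Section~3 only reproves the ``existence'' half in general form. Indeed, your existence argument is essentially that general argument specialized to $n=2$: your identity $T_1T_2T_3(\mathbf{1}-T_1T_2)=(\mathbf{1}-T_2T_3)T_1T_2T_3$ is the unlabelled braid lemma $(T_1\cdots T_n)(T_1\cdots T_k)=(T_2\cdots T_{k+1})(T_1\cdots T_n)$, and your computation that $R_3(\mathbf{1}-T_1T_2)$ annihilates $\ker(\mathbf{1}+T_1)$ is Lemma~\ref{ntonplus1}; the only deviation is that you verify condition (b) by showing $T_1T_2T_3$ maps $\ker(\mathbf{1}+T_1)$ into $\ker(\mathbf{1}+T_2)$ using braidedness and $T_1T_3=T_3T_1$, whereas the paper's construction of $\mathcal{I}_{n+1}$ uses the Wick property of $\mathcal{I}_n$ through $T_1\cdots T_{n+1}(\mathcal{I}_n\otimes\mathcal{H}\otimes\mathcal{H})=T_1\cdots T_n(\mathcal{I}_n\otimes T(\mathcal{H}\otimes\mathcal{H}))\subset\mathcal{H}\otimes\mathcal{I}_n\otimes\mathcal{H}$; both are valid. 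Your maximality half is genuinely different from anything in the paper: you import from \cite{jps} (quoted in the introduction, valid for braided contractions) that any homogeneous Wick ideal of higher degree lies inside the largest quadratic one, so a degree-$3$ generator space $J$ sits in $\mathcal{I}_2\otimes\mathcal{H}+\mathcal{H}\otimes\mathcal{I}_2$, and then the computation $R_3(v_A+v_B)=T_1T_2v_A+v_B$ forces $v=(\mathbf{1}-T_1T_2)v_A\in\mathcal{I}_3$; taking $J=\ker R_3$ even yields $\ker R_3=\mathcal{I}_3$. The closest result in the paper, Proposition~\ref{keqr}, gets maximality by a dimension count via Lemma~\ref{rntn} and requires injectivity of $\mathbf{1}-T_1T_2\cdots T_m$ and $\mathbf{1}-T_1^2T_2\cdots T_m$; your route avoids any invertibility assumption and works for every braided contraction, at the cost of relying on the external containment theorem of \cite{jps}, which is precisely where the contraction hypothesis enters.
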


Below we will often say "homogeneous Wick ideal of degree $n$" meaning a
 linear subspace in $\mathcal{H}^{\otimes n}$ generating this ideal.
\section{Homogeneous Wick ideals}
We start with a simple observation, showing that the product of homogeneous Wick ideals is again a  homogeneous Wick ideal.
\begin{proposition}
Let $\mathcal{J}_n$ and $\mathcal{J}_k$ be homogeneous Wick ideals of degree $n$ and $k$ respectively, then their tensor product  $\mathcal{J}_n\otimes\mathcal{J}_k$ is a homogeneous Wick ideal of degree $n+k$.
\end{proposition}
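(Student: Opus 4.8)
The plan is to verify, for the subspace $\mathcal{J}_n\otimes\mathcal{J}_k\subset\mathcal{H}^{\otimes n}\otimes\mathcal{H}^{\otimes k}=\mathcal{H}^{\otimes(n+k)}$, the criterion that characterises homogeneous Wick ideals (the Remark recording the criterion for a homogeneous ideal to be Wick): it is enough to prove that
\[
\mathcal{H}^*\otimes(\mathcal{J}_n\otimes\mathcal{J}_k)\ \subset\ \mathcal{J}_n\otimes\mathcal{J}_k\ +\ (\mathcal{J}_n\otimes\mathcal{J}_k)\otimes\mathcal{H}^*
\]
in $W(T)$. The mechanism is a two-step ``pull-through'' of an annihilator: first past the factor belonging to $\mathcal{J}_n$, using that $\mathcal{J}_n$ generates a Wick ideal, and then past the factor belonging to $\mathcal{J}_k$, using the same property for $\mathcal{J}_k$.

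First I would fix $X\in\mathcal{J}_n$, $Y\in\mathcal{J}_k$ and an index $i$. Since the canonical quotient map $\mathcal{T}(\mathcal{H},\mathcal{H}^*)\to W(T)$ is an algebra homomorphism and the tensor product is associative, the image of $e_i^*\otimes X\otimes Y$ in $W(T)$ equals $(e_i^*\otimes X)\cdot Y$. The Wick-ideal property of $\mathcal{J}_n$ gives $e_i^*\otimes X\in\mathcal{J}_n+\mathcal{J}_n\otimes\mathcal{H}^*$ in $W(T)$, so right multiplication by $Y$ yields
\[
e_i^*\otimes X\otimes Y\ \in\ \mathcal{J}_n\otimes Y\ +\ \mathcal{J}_n\otimes(\mathcal{H}^*\otimes Y).
\]
The first summand is already in Wick-ordered form (creators only) and lies in $\mathcal{J}_n\otimes\mathcal{J}_k$ because $Y\in\mathcal{J}_k$. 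For the second, the Wick-ideal property of $\mathcal{J}_k$ gives $\mathcal{H}^*\otimes Y\in\mathcal{J}_k+\mathcal{J}_k\otimes\mathcal{H}^*$ in $W(T)$, hence
\[
\mathcal{J}_n\otimes(\mathcal{H}^*\otimes Y)\ \subset\ \mathcal{J}_n\otimes\mathcal{J}_k\ +\ (\mathcal{J}_n\otimes\mathcal{J}_k)\otimes\mathcal{H}^*,
\]
the right-hand pieces again being Wick-ordered. Adding the contributions gives $e_i^*\otimes X\otimes Y\in\mathcal{J}_n\otimes\mathcal{J}_k+(\mathcal{J}_n\otimes\mathcal{J}_k)\otimes\mathcal{H}^*$, and letting $X$, $Y$, $i$ range over all choices proves the displayed inclusion; the generating subspace $\mathcal{J}_n\otimes\mathcal{J}_k$ visibly sits in degree $n+k$.

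The computation is short, and I do not expect a genuine obstacle; the only point needing attention is the bookkeeping of tensor positions. Each application of a relation from \eqref{wick} rewrites a single adjacent pair $e_i^*\otimes e_j$, so moving $e_i^*$ first completely through the $\mathcal{J}_n$-block and only afterwards through the $\mathcal{J}_k$-block keeps every newly produced factor in its proper slot, and no reduction ever involves factors from both blocks at once; this is the step I would write out with care. One can, if desired, shorten the argument with the sharper observation that for a homogeneous Wick ideal one in fact has $\mathcal{H}^*\otimes\mathcal{J}_n\subset\mathcal{J}_n\otimes\mathcal{H}^*$ in $W(T)$ --- the creators-only component of $e_i^*\otimes X$ would have to belong to $\mathcal{J}_n$ yet has tensor degree $n-1$, so it vanishes --- after which $e_i^*\otimes X\otimes Y$ collapses directly into $(\mathcal{J}_n\otimes\mathcal{J}_k)\otimes\mathcal{H}^*$.
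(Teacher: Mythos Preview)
Your proof is correct and follows exactly the same two-step pull-through as the paper: first apply the Wick property of $\mathcal{J}_n$ to get into $\mathcal{J}_n\otimes\mathcal{J}_k+\mathcal{J}_n\otimes\mathcal{H}^*\otimes\mathcal{J}_k$, then apply the Wick property of $\mathcal{J}_k$ to the second summand. The only difference is cosmetic---you work elementwise while the paper writes the same chain of inclusions at the level of subspaces---and your parenthetical sharpening ($\mathcal{H}^*\otimes\mathcal{J}_n\subset\mathcal{J}_n\otimes\mathcal{H}^*$ by a degree count) is a correct but optional extra.
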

\begin{proof}
Indeed, since for a Wick ideal one has
\[
\mathcal{H}^*\otimes\mathcal{I}\subset\mathcal{I}+\mathcal{I}\otimes\mathcal{H}^*
\]
we get
\begin{align*}
\mathcal{H}^* &\otimes(\mathcal{J}_n\otimes\mathcal{J}_k)
\subset(\mathcal{J}_n+\mathcal{J}_n\otimes\mathcal{H}^*)
\otimes\mathcal{J}_k=\mathcal{J}_n\otimes\mathcal{J}_k+
\mathcal{J}_n\otimes\mathcal{H}^*\otimes\mathcal{J}_k\subset\\
&\subset\mathcal{J}_n\otimes\mathcal{J}_k+
\mathcal{J}_n\otimes\mathcal{J}_k
+\mathcal{J}_n\otimes\mathcal{J}_k\otimes\mathcal{H}^*
=\mathcal{J}_n\otimes\mathcal{J}_k
+\mathcal{J}_n\otimes\mathcal{J}_k\otimes\mathcal{H}^*.
\end{align*}
Thus, $\mathcal{J}_n\otimes\mathcal{J}_k\subset\mathcal{H}^{\otimes (n+k) }$ is a Wick ideal.
\end{proof}

The following proposition was proved in \cite{jsw} for quadratic Wick ideals and in \cite{proams} in general case.
\begin{proposition}\label{wickide}
Let $P\colon\mathcal{H}^{\otimes n}\rightarrow\mathcal{H}^{\otimes
n}$ be a  projection. The subspace
$\mathcal{I}=P(\mathcal{H}^{\otimes n})$ generates a Wick ideal iff
\begin{enumerate}
\item $R_nP=0$ (equality in $\mathcal{H}^{\otimes n}$),
\item $[\mathbf{1}_{\mathcal{H}}\otimes (\mathbf{1}_{\mathcal{H}^{\otimes n}}-P)]T_1T_2\cdots T_n [P\otimes\mathbf{1}_{\mathcal{H}}]=0$ (equality in $\mathcal{H}^{\otimes n+1}$).
\end{enumerate}
Moreover, if  $T$ is braided and $P$ is the  projection onto $\ker R_n$, the second condition holds automatically and hence $\ker R_n$ generates the largest homogenenous Wick ideal of degree $n$.
\end{proposition}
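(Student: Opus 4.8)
The plan is to read both relations off the explicit Wick-ordering formula for $e_i^*\otimes X$ recalled in the introduction, using that $W(T)$ is linearly spanned by the Wick-ordered monomials $e_{i_1}\otimes\cdots\otimes e_{i_p}\otimes e_{j_1}^*\otimes\cdots\otimes e_{j_q}^*$. Writing $W(T)=\bigoplus_{p,q\ge 0}V_{p,q}$ for the resulting bigrading, with $V_{p,q}$ identified with $\mathcal{H}^{\otimes p}\otimes(\mathcal{H}^*)^{\otimes q}$ and $\mathcal{T}(\mathcal{H})=\bigoplus_{p}V_{p,0}$, put $\mathcal{I}_0=P(\mathcal{H}^{\otimes n})$. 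By the characterization of homogeneous Wick ideals recalled above, $\mathcal{I}_0$ generates a Wick ideal iff $e_i^*\otimes X\in\mathcal{I}_0+\mathcal{I}_0\otimes\mathcal{H}^*$ in $W(T)$ for all $i=1,\dots,d$ and all $X\in\mathcal{I}_0$. Substituting $e_i^*\otimes X=\mu_0(e_i^*)(R_nX+\sum_{k=1}^d T_1T_2\cdots T_n(X\otimes e_k)\otimes e_k^*)$ — with the convention that $\mu_0(e_i^*)$ acts only on the $\mathcal{T}(\mathcal{H})$-leg of $Y\otimes e_k^*$ — the left-hand side has $V_{n-1,0}$-component $\mu_0(e_i^*)R_nX$, $V_{n,1}$-component $\sum_k(\mu_0(e_i^*)T_1\cdots T_n(X\otimes e_k))\otimes e_k^*$, and no other nonzero component.

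Next, since $\mathcal{I}_0\subset V_{n,0}$ and $\mathcal{I}_0\otimes\mathcal{H}^*\subset V_{n,1}$, comparing bigraded components shows that $e_i^*\otimes X\in\mathcal{I}_0+\mathcal{I}_0\otimes\mathcal{H}^*$ is equivalent to the conjunction of: (a) $\mu_0(e_i^*)R_nX=0$ for every $i$ (the $V_{n,0}$-component of the left side being $0$ rules out any contribution of $\mathcal{I}_0$ itself); and (b) $\sum_k(\mu_0(e_i^*)T_1\cdots T_n(X\otimes e_k))\otimes e_k^*\in\mathcal{I}_0\otimes\mathcal{H}^*$ for every $i$. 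Since for $Y\in\mathcal{H}^{\otimes m}$ the vectors $\mu_0(e_1^*)Y,\dots,\mu_0(e_d^*)Y$ are exactly the coordinates of $Y$ along its first tensor leg, quantifying (a) over $X\in\mathcal{I}_0$ gives precisely $R_nP=0$, the first condition. Using linear independence of $e_1^*,\dots,e_d^*$, (b) becomes $\mu_0(e_i^*)T_1\cdots T_n(X\otimes e_k)\in\mathcal{I}_0$ for all $i,k$, hence (by the same first-leg remark) $T_1\cdots T_n(X\otimes e_k)\in\mathcal{H}\otimes\mathcal{I}_0$ for all $k$; letting $X$ range over $\mathcal{I}_0$ and $e_k$ over a basis of $\mathcal{H}$ this reads $T_1\cdots T_n(\mathcal{I}_0\otimes\mathcal{H})\subset\mathcal{H}\otimes\mathcal{I}_0$, i.e. $[\mathbf{1}_{\mathcal{H}}\otimes(\mathbf{1}_{\mathcal{H}^{\otimes n}}-P)]T_1\cdots T_n[P\otimes\mathbf{1}_{\mathcal{H}}]=0$, the second condition. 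This yields the equivalence.

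For the last assertion I would take $P$ to be the orthogonal projection onto $\ker R_n$; the first condition is then automatic, so only the second, i.e. $T_1\cdots T_n(\ker R_n\otimes\mathcal{H})\subset\mathcal{H}\otimes\ker R_n$, needs to be verified, and this is where braidedness enters. The key identity is $(T_1T_2\cdots T_n)T_i=T_{i+1}(T_1T_2\cdots T_n)$ on $\mathcal{H}^{\otimes(n+1)}$, valid for $1\le i\le n-1$; it follows by commuting $T_i$ past $T_{i+2},\dots,T_n$, applying the braid relation $T_iT_{i+1}T_i=T_{i+1}T_iT_{i+1}$, and commuting $T_{i+1}$ past $T_1,\dots,T_{i-1}$. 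Since on $\mathcal{H}^{\otimes(n+1)}$ one has $R_n\otimes\mathbf{1}_{\mathcal{H}}=\mathbf{1}+\sum_{k=1}^{n-1}T_1\cdots T_k$ and $\mathbf{1}_{\mathcal{H}}\otimes R_n=\mathbf{1}+\sum_{k=1}^{n-1}T_2\cdots T_{k+1}$, writing $W=T_1\cdots T_n$ and applying this identity term by term gives $W(R_n\otimes\mathbf{1}_{\mathcal{H}})=(\mathbf{1}_{\mathcal{H}}\otimes R_n)W$. Hence for $X\in\ker R_n$ and $v\in\mathcal{H}$, $(\mathbf{1}_{\mathcal{H}}\otimes R_n)(W(X\otimes v))=W((R_nX)\otimes v)=0$, so $W(X\otimes v)\in\ker(\mathbf{1}_{\mathcal{H}}\otimes R_n)=\mathcal{H}\otimes\ker R_n$, which is the second condition. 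Maximality is then immediate from necessity of the first condition: any subspace of $\mathcal{H}^{\otimes n}$ generating a homogeneous Wick ideal of degree $n$ is contained in $\ker R_n$, and $\ker R_n$ itself generates one.

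The one genuinely delicate ingredient is the bigraded splitting $W(T)=\bigoplus_{p,q}V_{p,q}$: the whole argument rests on the Wick-ordered monomials being \emph{linearly independent} in $W(T)$, so that an identity in $W(T)$ may be compared $V_{p,q}$-component by component — this is the standard Wick-ordering (PBW-type) property of $W(T)$ from \cite{jsw}, which I would quote rather than reprove. Granting it, the forward implication is pure bookkeeping with $\mu_0(e_i^*)$, and the braided case collapses to the elementary commutation identity $WT_i=T_{i+1}W$; the only other point requiring care is the convention extending $\mu_0(e_i^*)$ across a trailing $\otimes e_k^*$.
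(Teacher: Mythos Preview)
The paper does not actually give a proof of this proposition: it is quoted from the references \cite{jsw} (quadratic case) and \cite{proams} (general case), with no argument supplied. Your proposed proof is correct and is essentially the natural one, built from the ingredients the paper itself assembles: the Wick-ordering formula of Remark~1 (from \cite{proams}), the PBW-type linear independence of Wick-ordered monomials (from \cite{jsw}), and the characterisation of homogeneous Wick ideals in Remark~2. Your bigraded bookkeeping is exactly the reason conditions (1) and (2) separate, and your observation that $\mu_0(e_i^*)Y$ for $i=1,\dots,d$ recovers the first-leg coordinates of $Y$ is what converts the pointwise conditions into the operator equalities $R_nP=0$ and $[\mathbf{1}\otimes(\mathbf{1}-P)]T_1\cdots T_n[P\otimes\mathbf{1}]=0$.

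One remark on the braided part: the identity $(T_1T_2\cdots T_n)T_i=T_{i+1}(T_1T_2\cdots T_n)$ that you prove and then sum to obtain $W(R_n\otimes\mathbf{1})=(\mathbf{1}\otimes R_n)W$ is exactly the content of Lemma~2 of the paper, which appears a few lines later. So your argument for the ``moreover'' clause coincides with what the paper would use, just assembled in advance rather than quoted. A tiny wording slip: in your sentence ``the $V_{n,0}$-component of the left side being $0$ rules out any contribution of $\mathcal{I}_0$ itself'', you mean that the left side lives in $V_{n-1,0}\oplus V_{n,1}$ and hence has \emph{no} $V_{n,0}$-component, so the $\mathcal{I}_0$ summand on the right cannot contribute and the $V_{n-1,0}$-component must vanish outright. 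The logic is fine; only the phrasing invites misreading.
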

\begin{remark}\label{remincl}
Note, that  the second condition of Proposition \ref{wickide} means
\[
T_1T_2\cdots T_n \bigl(\mathcal{I}\otimes\mathcal{H}\bigr)\subset\mathcal{H}\otimes\mathcal{I}.
\]
\end{remark}
\begin{lemma}\label{ntonplus1}
Let $\mathcal{I}\subset\mathcal{H}^{\otimes n}$ generate a  homogeneous Wick ideal, then
\[
\bigl(\mathbf{1}_{\mathcal{H}^{\otimes( n+1)}}-T_1T_2\cdots T_n\bigr)(\mathcal{I}\otimes\mathcal{H})\subset\ker R_{n+1}.
\]
\end{lemma}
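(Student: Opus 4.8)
The plan is to prove the stronger pointwise statement that $R_{n+1}$ annihilates $(\mathbf{1}_{\mathcal{H}^{\otimes(n+1)}}-T_1T_2\cdots T_n)v$ for every $v\in\mathcal{I}\otimes\mathcal{H}$. I will first unpack the hypothesis: applying Proposition \ref{wickide} with $P$ the orthogonal projection onto $\mathcal{I}$, condition (1) says $R_n\mathcal{I}=\{0\}$, and condition (2), read as in Remark \ref{remincl}, says $T_1T_2\cdots T_n(\mathcal{I}\otimes\mathcal{H})\subseteq\mathcal{H}\otimes\mathcal{I}$. These are the only two facts about $\mathcal{I}$ that I will use.

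The next step is to record two elementary rewritings of the operator $R_{n+1}$ on $\mathcal{H}^{\otimes(n+1)}$, viewed once as $\mathcal{H}^{\otimes n}\otimes\mathcal{H}$ and once as $\mathcal{H}\otimes\mathcal{H}^{\otimes n}$. Collecting the summands of $R_{n+1}$ that do not involve $T_n$ gives
\[
R_{n+1}=(R_n\otimes\mathbf{1}_{\mathcal{H}})+T_1T_2\cdots T_n,
\]
with $R_n$ acting on the first $n$ legs, while factoring $T_1$ out of every summand except the identity gives
\[
R_{n+1}=\mathbf{1}_{\mathcal{H}^{\otimes(n+1)}}+T_1(\mathbf{1}_{\mathcal{H}}\otimes R_n),
\]
with $R_n$ now acting on the last $n$ legs; in both identities the only thing to check is the index bookkeeping for which legs $R_n$ and the $T_i$ touch. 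From the first identity, every $v\in\mathcal{I}\otimes\mathcal{H}$ satisfies $R_{n+1}v=T_1T_2\cdots T_n v$, because $(R_n\otimes\mathbf{1}_{\mathcal{H}})(\mathcal{I}\otimes\mathcal{H})=R_n\mathcal{I}\otimes\mathcal{H}=\{0\}$. From the second identity, $R_{n+1}$ acts as the identity on $\mathcal{H}\otimes\mathcal{I}$, because $(\mathbf{1}_{\mathcal{H}}\otimes R_n)(\mathcal{H}\otimes\mathcal{I})=\mathcal{H}\otimes R_n\mathcal{I}=\{0\}$.

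It remains to combine these. Given $v\in\mathcal{I}\otimes\mathcal{H}$, set $w:=T_1T_2\cdots T_n v$. By the Wick condition (2) we have $w\in\mathcal{H}\otimes\mathcal{I}$, hence $R_{n+1}w=w$; and $R_{n+1}v=w$ by the previous paragraph. Therefore
\[
R_{n+1}\bigl(\mathbf{1}_{\mathcal{H}^{\otimes(n+1)}}-T_1T_2\cdots T_n\bigr)v=R_{n+1}v-R_{n+1}w=w-w=0,
\]
so $\bigl(\mathbf{1}_{\mathcal{H}^{\otimes(n+1)}}-T_1T_2\cdots T_n\bigr)(\mathcal{I}\otimes\mathcal{H})\subseteq\ker R_{n+1}$, as claimed.

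I do not anticipate a serious obstacle: the argument is just a two-line computation once the two factorizations of $R_{n+1}$ are in place. The one place to be careful is making sure Proposition \ref{wickide} may indeed be invoked with the orthogonal projection onto $\mathcal{I}$, so that both $R_n\mathcal{I}=\{0\}$ and $T_1T_2\cdots T_n(\mathcal{I}\otimes\mathcal{H})\subseteq\mathcal{H}\otimes\mathcal{I}$ are available, and keeping straight which tensor legs each operator acts on in the two rewritings.
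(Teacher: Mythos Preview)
Your proof is correct and follows essentially the same route as the paper: both arguments use the two factorizations $R_{n+1}=(R_n\otimes\mathbf{1}_{\mathcal{H}})+T_1T_2\cdots T_n$ and $R_{n+1}=\mathbf{1}+T_1(\mathbf{1}_{\mathcal{H}}\otimes R_n)$, apply $R_n\mathcal{I}=\{0\}$ to kill the first summand on $\mathcal{I}\otimes\mathcal{H}$, and then use the inclusion $T_1T_2\cdots T_n(\mathcal{I}\otimes\mathcal{H})\subset\mathcal{H}\otimes\mathcal{I}$ from Remark~\ref{remincl} to see that $R_{n+1}$ fixes the image $w$. The only cosmetic difference is that you package the computation by naming $w$ and stating the two consequences separately before subtracting, whereas the paper writes out the chain of equalities in one display.
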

\begin{proof}
Let $X\in\mathcal{I}$. Then $X\in\ker R_n$. Note  that
\[
R_{n+1}=R_n\otimes\mathbf{1}_{\mathcal{H}}+T_1T_2\cdots T_n=\mathbf{1}_{\mathcal{H}^{\otimes (n+1)}}+T_1(\mathbf{1}_{\mathcal{H}}\otimes R_n)
\]

Then
for any $i=1,\ldots,d$ one has
\begin{align*}
R_{n+1}&(\mathbf{1}_{\mathcal{H}^{\otimes (n+1)}}-T_1T_2\cdots T_n)(X\otimes e_i)=\\
&=R_{n+1}(X\otimes e_i)-
R_{n+1}T_1T_2\cdots T_n (X\otimes e_i)=\\
&=(R_n\otimes\mathbf{1}_{\mathcal{H}}+T_1T_2\cdots T_n)(X\otimes e_i)\\
&-
(\mathbf{1}_{\mathcal{H}^{\otimes (n+1)}}+T_1(\mathbf{1}_{\mathcal{H}}\otimes R_n))T_1T_2\cdots T_n (X\otimes e_i)=\\
&=T_1T_2\cdots T_n(X\otimes e_i)-T_1T_2\cdots T_n(X\otimes e_i)\\
& -T_1(\mathbf{1}_{\mathcal{H}}\otimes R_n)T_1T_2\cdots T_n(X\otimes e_i)=0,
\end{align*}
where we used
\[
T_1T_2\cdots T_n(\mathcal{I}\otimes\mathcal{H})\subset\mathcal{H}\otimes{\mathcal{I}}
\subset\mathcal{H}\otimes\ker R_n=\ker(\mathbf{1}_{\mathcal{H}}\otimes R_n).
\]
\end{proof}
\noindent The following corollary is immediate.
\begin{corollary}
If the operator $T$ is braided, then
\[
(\mathbf{1}_{\mathcal{H}^{\otimes (n+1)}}-T_1T_2\cdots T_n)(\ker R_n\otimes\mathcal{H})\subset\ker R_{n+1}.
\]
\end{corollary}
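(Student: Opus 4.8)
The plan is to derive this corollary directly from Lemma \ref{ntonplus1}, by checking that under the braiding hypothesis the subspace $\ker R_n$ is an admissible choice for the subspace $\mathcal{I}$ appearing in that lemma. The conclusion of Lemma \ref{ntonplus1} holds for \emph{any} $\mathcal{I}\subset\mathcal{H}^{\otimes n}$ that generates a homogeneous Wick ideal, so it suffices to verify that $\ker R_n$ has this property whenever $T$ is braided.

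First I would invoke the final assertion of Proposition \ref{wickide}: if $T$ is braided and $P$ denotes the orthogonal projection of $\mathcal{H}^{\otimes n}$ onto $\ker R_n$, then $P$ automatically satisfies conditions (1) and (2) of that proposition, so $\mathcal{I}:=P(\mathcal{H}^{\otimes n})=\ker R_n$ generates a homogeneous Wick ideal (in fact the largest one of degree $n$). In particular the standing hypothesis of Lemma \ref{ntonplus1} is met. Then I would simply apply Lemma \ref{ntonplus1} with this choice of $\mathcal{I}$, which yields
\[
(\mathbf{1}_{\mathcal{H}^{\otimes (n+1)}}-T_1T_2\cdots T_n)(\ker R_n\otimes\mathcal{H})\subset\ker R_{n+1},
\]
exactly the claimed inclusion; no further computation is needed.

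There is essentially no obstacle here: the only substantive input is the identification of $\ker R_n$ as the generator of a homogeneous Wick ideal, which is already packaged in Proposition \ref{wickide}. The braiding hypothesis is used solely to guarantee that condition (2) of that proposition holds automatically — equivalently, via Remark \ref{remincl}, that $T_1T_2\cdots T_n(\ker R_n\otimes\mathcal{H})\subset\mathcal{H}\otimes\ker R_n$ — which is precisely the containment that drives the telescoping cancellation in the proof of Lemma \ref{ntonplus1}. One could alternatively give a self-contained argument by re-running that computation with $X\in\ker R_n$ and establishing $T_1T_2\cdots T_n(\ker R_n\otimes\mathcal{H})\subset\mathcal{H}\otimes\ker R_n$ directly from $T_1T_2T_1=T_2T_1T_2$, but routing through Proposition \ref{wickide} is the cleaner presentation and makes the corollary genuinely immediate.
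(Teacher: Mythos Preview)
Your proposal is correct and matches the paper's approach: the paper simply declares the corollary ``immediate'' after Lemma~\ref{ntonplus1}, and the implicit reasoning is exactly what you spell out --- use Proposition~\ref{wickide} to see that $\ker R_n$ generates a homogeneous Wick ideal when $T$ is braided, then apply Lemma~\ref{ntonplus1} with $\mathcal{I}=\ker R_n$.
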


Below we will use the following simple observation
\begin{lemma}
Let  $T$ be braided. Then for any $n\ge 2$ and $k\le n-1$
\[
(T_1T_2\cdots T_n)(T_1T_2\cdots T_k)=(T_2T_3\cdots T_{k+1})(T_1T_2\cdots T_n).
\]
\end{lemma}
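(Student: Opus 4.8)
The plan is to reduce the identity to a single ``conjugation'' relation and then iterate it. Write $c:=T_1T_2\cdots T_n$, viewed as an operator on $\mathcal{H}^{\otimes(n+1)}$ (each $T_i$ acting on the $i$-th and $(i+1)$-st tensor legs, so that the largest factor $T_n$ still makes sense). The key claim is the \emph{shift relation}
\[
c\,T_i = T_{i+1}\,c,\qquad i=1,\ldots,n-1,
\]
which is exactly the ``$cT_ic^{-1}=T_{i+1}$'' phenomenon familiar from the braid group. Granting this, the lemma follows by a short induction on $k$. For $k=1$ it is the shift relation with $i=1$. If $c(T_1\cdots T_k)=(T_2\cdots T_{k+1})c$ for some $k\le n-2$, then
\[
c(T_1\cdots T_kT_{k+1})=\bigl(c(T_1\cdots T_k)\bigr)T_{k+1}=(T_2\cdots T_{k+1})\,c\,T_{k+1}=(T_2\cdots T_{k+1})T_{k+2}\,c=(T_2\cdots T_{k+2})\,c,
\]
where the third equality uses the shift relation with $i=k+1\le n-1$. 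Since this produces the asserted identity for index $k+1\le n-1$, the induction covers all $k\le n-1$.

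It remains to prove the shift relation, and this is the only place braidedness is used. Fix $i\le n-1$ and write $c\,T_i=T_1\cdots T_{i-1}\,(T_iT_{i+1}T_{i+2}\cdots T_n)\,T_i$. The factor $T_i$ commutes with each of $T_{i+2},\ldots,T_n$ because those act on tensor legs disjoint from legs $i,i+1$ (indices differing by at least $2$); this ``far commutation'' is automatic from the tensor structure and requires no hypothesis on $T$. Hence $c\,T_i=T_1\cdots T_{i-1}\,(T_iT_{i+1}T_i)\,T_{i+2}\cdots T_n$. Applying the braid relation $T_iT_{i+1}T_i=T_{i+1}T_iT_{i+1}$ — valid on $\mathcal{H}^{\otimes(n+1)}$ for every $i\le n-1$, by tensoring the braidedness identity $T_1T_2T_1=T_2T_1T_2$ with copies of $\mathbf 1_{\mathcal H}$ — gives $c\,T_i=T_1\cdots T_{i-1}\,(T_{i+1}T_iT_{i+1})\,T_{i+2}\cdots T_n$. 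Finally $T_{i+1}$ commutes with $T_1,\ldots,T_{i-1}$ (again disjoint legs), so $c\,T_i=T_{i+1}\,T_1\cdots T_{i-1}T_iT_{i+1}T_{i+2}\cdots T_n=T_{i+1}\,c$.

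There is no substantial obstacle; the points that need care are purely bookkeeping: reading all products as operators on the common space $\mathcal{H}^{\otimes(n+1)}$, distinguishing the tensorial far-commutation from the genuine braid relation, and checking that the braid relation is invoked only for the admissible indices $i\le n-1$ — which is exactly what the hypothesis $k\le n-1$ guarantees throughout the induction.
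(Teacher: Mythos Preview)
Your proof is correct and follows essentially the same route as the paper: both arguments reduce the identity to the single shift relation $(T_1T_2\cdots T_n)T_j=T_{j+1}(T_1T_2\cdots T_n)$ for $1\le j\le n-1$, establish that relation via far commutation together with the braid relation $T_jT_{j+1}T_j=T_{j+1}T_jT_{j+1}$, and then iterate. You are just more explicit about the induction on $k$ and about the ambient space $\mathcal{H}^{\otimes(n+1)}$, but there is no substantive difference.
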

\begin{proof}
Evidently it is enough to check that
\[
T_1T_2\cdots T_n T_j=T_{j+1}T_1T_2\cdots T_n,\quad 1\le j\le n-1.
\]
Indeed, since $T_iT_j=T_jT_i$ when $|i-j|\ge 2$ and $T_jT_{j+1}T_j=T_{j+1}T_jT_{j+1}$ we get
\begin{align*}
T_1T_2\cdots T_n T_j & =
T_1T_2\cdots T_{j-1}T_{j}T_{j+1}T_jT_{j+2}\cdots T_n=\\
&=T_1T_2\cdots T_{j-1}T_{j+1}T_jT_{j+1}T_{j+2}\cdots T_n=\\
&=T_{j+1}T_1T_2\cdots T_n.
\end{align*}
\end{proof}

The following proposition  gives a procedure to compute
 generators of certain homogeneous Wick ideals of degree $n+1$ out of generators of Wick ideals of degree $n$ when $T$ is braided.
\begin{proposition}
Let  $T$ be braided and $\mathcal{I}_n\subset\mathcal{H}^{\otimes n}$ generate a homogeneous Wick ideal of degree $n$. Then
\[
\mathcal{I}_{n+1}=(\mathbf{1}_{\mathcal{H}^{\otimes (n+1)}}-T_1T_2\cdots T_n)(\mathcal{I}_n\otimes\mathcal{H})
\]
generates a homogeneous Wick ideal of degree $n+1$.
\end{proposition}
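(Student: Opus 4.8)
The plan is to verify the two conditions of Proposition~\ref{wickide} for the subspace $\mathcal{I}_{n+1}$. Taking $P$ to be the orthogonal projection onto $\mathcal{I}_{n+1}$, both conditions depend only on the subspace $\operatorname{im}P=\mathcal{I}_{n+1}$: condition~(1) reads $\mathcal{I}_{n+1}\subset\ker R_{n+1}$, and condition~(2) reads, by Remark~\ref{remincl}, $T_1T_2\cdots T_{n+1}(\mathcal{I}_{n+1}\otimes\mathcal{H})\subset\mathcal{H}\otimes\mathcal{I}_{n+1}$. So it suffices to establish these two inclusions.

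The first inclusion is exactly Lemma~\ref{ntonplus1}: since $\mathcal{I}_n$ generates a homogeneous Wick ideal, that lemma gives $(\mathbf{1}_{\mathcal{H}^{\otimes(n+1)}}-T_1T_2\cdots T_n)(\mathcal{I}_n\otimes\mathcal{H})\subset\ker R_{n+1}$, which is precisely $\mathcal{I}_{n+1}\subset\ker R_{n+1}$.

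For the second inclusion I would first rewrite, inside $\mathcal{H}^{\otimes(n+2)}$,
\[
\mathcal{I}_{n+1}\otimes\mathcal{H}=(\mathbf{1}_{\mathcal{H}^{\otimes(n+2)}}-T_1T_2\cdots T_n)(\mathcal{I}_n\otimes\mathcal{H}^{\otimes 2}),
\]
where now $T_1,\dots,T_n$ are viewed as operators on $\mathcal{H}^{\otimes(n+2)}$ with the last tensor leg a spectator. Applying the commutation identity established above with $m=n+1$ and $k=n$, namely $(T_1\cdots T_{n+1})(T_1\cdots T_n)=(T_2\cdots T_{n+1})(T_1\cdots T_{n+1})$, one obtains the operator identity
\[
T_1T_2\cdots T_{n+1}\,(\mathbf{1}-T_1T_2\cdots T_n)=(\mathbf{1}-T_2T_3\cdots T_{n+1})\,T_1T_2\cdots T_{n+1}.
\]
Next I would check that $T_1T_2\cdots T_{n+1}(\mathcal{I}_n\otimes\mathcal{H}^{\otimes 2})\subset\mathcal{H}\otimes\mathcal{I}_n\otimes\mathcal{H}$: writing $T_1\cdots T_{n+1}=(T_1\cdots T_n)T_{n+1}$, the factor $T_{n+1}$ acts only on the last two legs and so preserves $\mathcal{I}_n\otimes\mathcal{H}^{\otimes 2}$, while $T_1\cdots T_n$ acts on the first $n+1$ legs and sends $\mathcal{I}_n\otimes\mathcal{H}$ into $\mathcal{H}\otimes\mathcal{I}_n$ by the homogeneous-Wick-ideal property of $\mathcal{I}_n$ (Remark~\ref{remincl} applied to $\mathcal{I}_n$), the last leg being untouched. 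Finally, $T_2T_3\cdots T_{n+1}$ fixes the first leg and restricts to ``$T_1\cdots T_n$'' on legs $2,\dots,n+2$, so applying $\mathbf{1}-T_2\cdots T_{n+1}$ to $\mathcal{H}\otimes\mathcal{I}_n\otimes\mathcal{H}$ yields $\mathcal{H}\otimes(\mathbf{1}_{\mathcal{H}^{\otimes(n+1)}}-T_1\cdots T_n)(\mathcal{I}_n\otimes\mathcal{H})=\mathcal{H}\otimes\mathcal{I}_{n+1}$. Chaining these, $T_1\cdots T_{n+1}(\mathcal{I}_{n+1}\otimes\mathcal{H})\subset\mathcal{H}\otimes\mathcal{I}_{n+1}$, which is the second condition.

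All the computations are routine; the only point requiring care is the bookkeeping of which leg each $T_i$ acts on after tensoring with extra copies of $\mathcal{H}$ and after the index shift $T_j\mapsto T_{j+1}$ produced by the braid relation — in particular, invoking the commutation identity with the correct range of indices and applying the Wick-ideal property of $\mathcal{I}_n$ on the right group of legs. I expect no genuine obstacle beyond this indexing.
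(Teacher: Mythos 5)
Your proposal is correct and follows essentially the same route as the paper: Lemma~\ref{ntonplus1} gives $\mathcal{I}_{n+1}\subset\ker R_{n+1}$, and the second Wick condition is verified via the braid commutation identity $(T_1\cdots T_{n+1})(T_1\cdots T_n)=(T_2\cdots T_{n+1})(T_1\cdots T_{n+1})$ together with the factorization $T_1\cdots T_{n+1}=(T_1\cdots T_n)T_{n+1}$ and the Wick property of $\mathcal{I}_n$ (Remark~\ref{remincl}), exactly as in the paper's argument.
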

\begin{proof}
According to Lemma \ref{ntonplus1}
\[
(\mathbf{1}_{\mathcal{H}^{\otimes (n+1)}}-T_1T_2\cdots T_n)(\mathcal{I}_n\otimes\mathcal{H})\subset\ker R_{n+1}
\]
so, it remains to prove that
\begin{equation}\label{**}
T_1T_2\cdots T_{n+1}(\mathcal{I}_{n+1}\otimes\mathcal{H})\subset
\mathcal{H}\otimes\mathcal{I}_{n+1}.
\end{equation}
Indeed
\begin{align*}
T_1T_2\cdots T_{n+1}&(\mathcal{I}_{n+1}\otimes\mathcal{H})=T_1T_2\cdots T_{n+1}
(\mathbf{1}_{\mathcal{H}^{\otimes (n+1)}}-T_1T_2\cdots T_n)(\mathcal{I}_n\otimes\mathcal{H}\otimes\mathcal{H})=\\
&=(T_1T_2\cdots T_{n+1}-T_1T_2\cdots T_{n+1}T_1T_2\cdots T_n)
(\mathcal{I}_n\otimes\mathcal{H}\otimes\mathcal{H})=\\
&=(T_1T_2\cdots T_{n+1}-T_2T_3\cdots T_{n+1}T_1T_2\cdots T_{n+1})(\mathcal{I}_n\otimes\mathcal{H}\otimes\mathcal{H})=\\
&=(\mathbf{1}_{\mathcal{H}^{\otimes (n+1)}}-T_2T_3\cdots T_{n+1})T_1T_2\cdots T_{n+1}(\mathcal{I}_n\otimes\mathcal{H}\otimes\mathcal{H})=\\
&=(\mathbf{1}_{\mathcal{H}^{\otimes (n+1)}}-T_2T_3\cdots T_{n+1})T_1T_2\cdots T_n(\mathcal{I}_n\otimes T(\mathcal{H}\otimes\mathcal{H}))\subset\\
&\subset (\mathbf{1}_{\mathcal{H}^{\otimes (n+1)}}-T_2T_3\cdots T_{n+1})T_1T_2\cdots T_n(\mathcal{I}_n\otimes\mathcal{H}\otimes\mathcal{H})\subset\\
&\subset
(\mathbf{1}_{\mathcal{H}^{\otimes (n+1)}}-T_2T_3\cdots T_{n+1})(\mathcal{H}\otimes\mathcal{I}_n\otimes\mathcal{H})=\\
&=\mathcal{H}\otimes (\mathbf{1}_{\mathcal{H}^{\otimes n}}-T_1T_2\cdots T_n)(\mathcal{I}_n\otimes\mathcal{H})=\\
&=\mathcal{H}\otimes\mathcal{I}_{n+1}.
\end{align*}
\end{proof}

Next aim is to describe largest Wick ideals.

\begin{lemma}
Let $T$ satisfy the braid relation. Then
\begin{equation}\label{rntcomm}
 R_{n+1}T_1T_2\cdots T_n=T_1T_2\cdots T_n+T_1^2T_2\cdots T_n
(R_n\otimes\mathbf{1}_{\mathcal{H}})
\end{equation}
\end{lemma}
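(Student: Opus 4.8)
The plan is to reduce the identity (\ref{rntcomm}) to the commutation lemma just proved, by first rearranging $R_{n+1}$ and then pushing the long operator $T_1T_2\cdots T_n$ through one copy of $R_n$. Recall from the proof of Lemma~\ref{ntonplus1} the purely combinatorial rewriting
\[
R_{n+1}=\mathbf{1}_{\mathcal{H}^{\otimes(n+1)}}+T_1\bigl(\mathbf{1}_{\mathcal{H}}\otimes R_n\bigr),
\]
which is valid for any $T$. Multiplying on the right by $T_1T_2\cdots T_n$ gives
\[
R_{n+1}T_1T_2\cdots T_n=T_1T_2\cdots T_n+T_1\bigl(\mathbf{1}_{\mathcal{H}}\otimes R_n\bigr)T_1T_2\cdots T_n,
\]
so, since $T_1\bigl(T_1T_2\cdots T_n\bigr)=T_1^2T_2\cdots T_n$, the asserted formula follows once we establish the intertwining relation
\[
\bigl(\mathbf{1}_{\mathcal{H}}\otimes R_n\bigr)\bigl(T_1T_2\cdots T_n\bigr)=\bigl(T_1T_2\cdots T_n\bigr)\bigl(R_n\otimes\mathbf{1}_{\mathcal{H}}\bigr)
\]
as operators on $\mathcal{H}^{\otimes(n+1)}$.

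To prove this intertwining relation I would expand both copies of $R_n$ as sums of descending products of the $T_i$. Put $W=T_1T_2\cdots T_n$. From the definition of $R_n$ one gets, as operators on $\mathcal{H}^{\otimes(n+1)}$,
\[
R_n\otimes\mathbf{1}_{\mathcal{H}}=\sum_{k=0}^{n-1}T_1T_2\cdots T_k,\qquad \mathbf{1}_{\mathcal{H}}\otimes R_n=\sum_{k=0}^{n-1}T_2T_3\cdots T_{k+1},
\]
with the convention that the $k=0$ summand is the identity. The preceding lemma says precisely that, for $T$ braided and each $0\le k\le n-1$,
\[
W\,(T_1T_2\cdots T_k)=(T_2T_3\cdots T_{k+1})\,W
\]
(the case $k=0$ being trivial). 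Summing over $k$ and using the two expansions above,
\[
W\bigl(R_n\otimes\mathbf{1}_{\mathcal{H}}\bigr)=\sum_{k=0}^{n-1}W\,(T_1\cdots T_k)=\sum_{k=0}^{n-1}(T_2\cdots T_{k+1})\,W=\bigl(\mathbf{1}_{\mathcal{H}}\otimes R_n\bigr)W,
\]
which is exactly the relation we need; substituting it into the expression for $R_{n+1}T_1T_2\cdots T_n$ above and replacing $T_1W$ by $T_1^2T_2\cdots T_n$ yields (\ref{rntcomm}).

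Everything here is bookkeeping once the preceding lemma is in hand; that lemma is the only place where the braid relation enters. The points that need a little care are the two different index shifts, namely that $R_n\otimes\mathbf{1}_{\mathcal{H}}$ is built from $T_1,\dots,T_{n-1}$ whereas $\mathbf{1}_{\mathcal{H}}\otimes R_n$ is built from $T_2,\dots,T_n$, and the treatment of the empty-product term $k=0$. As a sanity check, for $n=2$ one computes directly $R_3T_1T_2=T_1T_2+T_1^2T_2+T_1T_2T_1T_2$, and the braid relation gives $T_1T_2T_1T_2=T_1(T_2T_1T_2)=T_1(T_1T_2T_1)=T_1^2T_2T_1$, hence $R_3T_1T_2=T_1T_2+T_1^2T_2(\mathbf{1}+T_1)=T_1T_2+T_1^2T_2(R_2\otimes\mathbf{1}_{\mathcal{H}})$, in agreement with (\ref{rntcomm}).
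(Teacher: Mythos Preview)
Your proof is correct and follows essentially the same route as the paper: both use the decomposition $R_{n+1}=\mathbf{1}+T_1(\mathbf{1}_{\mathcal{H}}\otimes R_n)$ and then apply the preceding commutation lemma term by term to pass $T_1T_2\cdots T_n$ through $\mathbf{1}_{\mathcal{H}}\otimes R_n$, turning it into $(T_1T_2\cdots T_n)(R_n\otimes\mathbf{1}_{\mathcal{H}})$. The only difference is cosmetic: the paper writes out the sum $\mathbf{1}+T_2+T_2T_3+\cdots+T_2T_3\cdots T_n$ explicitly rather than packaging it as the intertwining relation $(\mathbf{1}_{\mathcal{H}}\otimes R_n)W=W(R_n\otimes\mathbf{1}_{\mathcal{H}})$.
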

\begin{proof}
Indeed
\begin{align*}
& R_{n+1}T_1T_2\cdots T_n=\\
&=T_1T_2\cdots T_n+
T_1(\mathbf{1}_{\mathcal{H}^{\otimes n+1}}+T_2+T_2T_3+\cdots +
T_2T_3\cdots T_n)T_1T_2\cdots T_n=\\
&=T_1T_2\cdots T_n+T_1^2T_2\cdots T_n
(\mathbf{1}_{\mathcal{H}^{\otimes n+1}}+T_1+T_1T_2+\cdots+T_1T_2\cdots T_{n-1})
=\\
&=T_1T_2\cdots T_n+T_1^2T_2\cdots T_n (R_n\otimes\mathbf{1}_{\mathcal{H}}).
\end{align*}
\end{proof}
\begin{lemma}\label{rntn}
Let $T$ be braided. Then
\[
R_{n+1}(\mathbf{1}_{\mathcal{H}^{\otimes( n+1)}}-T_1T_2\cdots T_n)=(\mathbf{1}_{\mathcal{H}^{\otimes (n+1)}}-T_1^2T_2\cdots T_n)(R_n\otimes\mathbf{1}_{\mathcal{H}}).
\]
\end{lemma}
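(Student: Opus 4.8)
The plan is to reduce everything to two already-established identities: the splitting $R_{n+1}=R_n\otimes\mathbf{1}_{\mathcal{H}}+T_1T_2\cdots T_n$ (recorded in the proof of Lemma~\ref{ntonplus1}) and the commutation formula~(\ref{rntcomm}) for $R_{n+1}T_1T_2\cdots T_n$ from the preceding lemma. No new braid manipulation is needed beyond what those two inputs already encapsulate; the work is a one-line cancellation.

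Concretely, I would expand the left-hand side as
\[
R_{n+1}(\mathbf{1}_{\mathcal{H}^{\otimes (n+1)}}-T_1T_2\cdots T_n)=R_{n+1}-R_{n+1}T_1T_2\cdots T_n,
\]
then substitute $R_{n+1}=R_n\otimes\mathbf{1}_{\mathcal{H}}+T_1T_2\cdots T_n$ into the first summand and~(\ref{rntcomm}) into the second:
\[
R_{n+1}-R_{n+1}T_1T_2\cdots T_n=\bigl(R_n\otimes\mathbf{1}_{\mathcal{H}}+T_1T_2\cdots T_n\bigr)-\bigl(T_1T_2\cdots T_n+T_1^2T_2\cdots T_n(R_n\otimes\mathbf{1}_{\mathcal{H}})\bigr).
\]
The two copies of $T_1T_2\cdots T_n$ cancel, leaving $(R_n\otimes\mathbf{1}_{\mathcal{H}})-T_1^2T_2\cdots T_n(R_n\otimes\mathbf{1}_{\mathcal{H}})=(\mathbf{1}_{\mathcal{H}^{\otimes (n+1)}}-T_1^2T_2\cdots T_n)(R_n\otimes\mathbf{1}_{\mathcal{H}})$, which is exactly the right-hand side.

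Honestly there is no real obstacle here; the only thing to be careful about is bookkeeping of the indices in the two invoked identities (that the splitting of $R_{n+1}$ is the one with $R_n$ acting on the last $n$ factors, matching the $(R_n\otimes\mathbf{1}_{\mathcal{H}})$ appearing on the right of~(\ref{rntcomm})), and noting that braidedness of $T$ is used only insofar as it was already used to derive~(\ref{rntcomm}). So the proof is simply the displayed chain of equalities above.
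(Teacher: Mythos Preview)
Your argument is correct and is exactly the paper's proof: expand $R_{n+1}-R_{n+1}T_1T_2\cdots T_n$, plug in $R_{n+1}=R_n\otimes\mathbf{1}_{\mathcal{H}}+T_1T_2\cdots T_n$ and the identity~(\ref{rntcomm}), cancel, and factor. One small slip in your commentary: in $R_n\otimes\mathbf{1}_{\mathcal{H}}$ the operator $R_n$ acts on the \emph{first} $n$ tensor factors, not the last $n$; your displayed computation uses it correctly, so this is purely a wording issue.
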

\begin{proof}
By the  previous Lemma
\begin{align*}
R_{n+1}&-R_{n+1}T_1T_2\cdots T_n=\\
&=R_n\otimes\mathbf{1}_{\mathcal{H}}+T_1T_2\cdots T_n-T_1T_2\cdots T_n-T_1^2T_2\cdots T_n (R_n\otimes\mathbf{1}_{\mathcal{H}})=\\
&=(1-T_1^2T_2\cdots T_n)(R_n\otimes\mathbf{1}_{\mathcal{H}}).
\end{align*}
\end{proof}


Let  $\mathcal{K}_2=\ker R_2$ and
\[
\mathcal{K}_{m+1}=(\mathbf{1}_{\mathcal{H}^{\otimes (m+1)}}-T_1T_2\cdots T_{m})(\mathcal{K}_m\otimes\mathcal{H}),\quad m\ge 2.
\]
Since by (\ref{**})
\[
\mathcal{K}_{m+1}\subset\mathcal{H}\otimes\mathcal{K}_m+
\mathcal{K}_m\otimes\mathcal{H},
\]\
the Wick ideals generated by $\mathcal{K}_m$, $m\ge 2$,  form a nested sequence
\[
\langle\mathcal{K}_2\rangle\supset\langle\mathcal{K}_3\rangle\supset\cdots\supset\langle \mathcal{K}_m\rangle\supset\cdots
\]
\begin{proposition}\label{keqr}
Suppose that $T$ is braided and for any $m\ge 2$
\[
\ker (\mathbf{1}_{\mathcal{H}^{\otimes (m+1)}}-T_1T_2\cdots T_m)=\{0\}\quad \mbox{and}\quad\ker(\mathbf{1}_{\mathcal{H}^{\otimes (m+1)}}-T_1^2T_2\cdots T_m)=\{0\}.
\]
Then
\[
\mathcal{K}_m=\ker R_m,\ m\ge 2,
\]
and hence  $\mathcal{K}_m$
generates the  largest homogeneous Wick ideals of degree $m$ for any $m\ge 2$.
\end{proposition}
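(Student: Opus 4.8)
The plan is to prove $\mathcal{K}_m=\ker R_m$ by induction on $m$. The base case $m=2$ holds by the very definition $\mathcal{K}_2=\ker R_2$. For the inductive step, assume $\mathcal{K}_m=\ker R_m$ and abbreviate $S=T_1T_2\cdots T_m$, viewed as an operator on $\mathcal{H}^{\otimes(m+1)}$. The first thing I would record is two elementary observations: (i) the hypothesis $\ker(\mathbf{1}_{\mathcal{H}^{\otimes(m+1)}}-S)=\{0\}$ means $\mathbf{1}-S$ is injective, and since $\mathcal{H}^{\otimes(m+1)}$ is finite dimensional, $\mathbf{1}-S$ is in fact invertible; (ii) $\ker(R_m\otimes\mathbf{1}_{\mathcal{H}})=\ker R_m\otimes\mathcal{H}$, so by the inductive hypothesis $\mathcal{K}_{m+1}=(\mathbf{1}-S)(\mathcal{K}_m\otimes\mathcal{H})=(\mathbf{1}-S)\ker(R_m\otimes\mathbf{1}_{\mathcal{H}})$.

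The engine of the argument is Lemma \ref{rntn}, which gives $R_{m+1}(\mathbf{1}_{\mathcal{H}^{\otimes(m+1)}}-S)=(\mathbf{1}_{\mathcal{H}^{\otimes(m+1)}}-T_1^2T_2\cdots T_m)(R_m\otimes\mathbf{1}_{\mathcal{H}})$. The inclusion $\mathcal{K}_{m+1}\subseteq\ker R_{m+1}$ is then immediate (it is also a special case of Lemma \ref{ntonplus1}): if $Z\in\ker(R_m\otimes\mathbf{1}_{\mathcal{H}})$, then $R_{m+1}(\mathbf{1}-S)Z=(\mathbf{1}-T_1^2T_2\cdots T_m)\,(R_m\otimes\mathbf{1}_{\mathcal{H}})Z=0$. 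For the reverse inclusion, take $Y\in\ker R_{m+1}$ and put $Z=(\mathbf{1}-S)^{-1}Y$, which is legitimate by observation (i). Applying Lemma \ref{rntn} again, $0=R_{m+1}Y=R_{m+1}(\mathbf{1}-S)Z=(\mathbf{1}-T_1^2T_2\cdots T_m)\,(R_m\otimes\mathbf{1}_{\mathcal{H}})Z$, and since $\ker(\mathbf{1}_{\mathcal{H}^{\otimes(m+1)}}-T_1^2T_2\cdots T_m)=\{0\}$ by hypothesis, we get $(R_m\otimes\mathbf{1}_{\mathcal{H}})Z=0$, i.e.\ $Z\in\ker(R_m\otimes\mathbf{1}_{\mathcal{H}})$. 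Hence $Y=(\mathbf{1}-S)Z\in\mathcal{K}_{m+1}$, and therefore $\mathcal{K}_{m+1}=\ker R_{m+1}$, closing the induction. Finally, because $T$ is braided, the last assertion of Proposition \ref{wickide} says that $\ker R_m$ generates the largest homogeneous Wick ideal of degree $m$; combined with $\mathcal{K}_m=\ker R_m$ this yields the claim for all $m\ge 2$.

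The only point that needs care, rather than being a real obstacle, is the passage from injectivity of $\mathbf{1}-S$ to its invertibility on the finite-dimensional space $\mathcal{H}^{\otimes(m+1)}$ — that is exactly what allows one to "divide by $\mathbf{1}-S$" and transport a kernel element of $R_{m+1}$ back into $\ker(R_m\otimes\mathbf{1}_{\mathcal{H}})$ — together with using Lemma \ref{rntn} in the correct direction. The remaining steps are bookkeeping with the tensor identity $\ker(R_m\otimes\mathbf{1}_{\mathcal{H}})=\ker R_m\otimes\mathcal{H}$ and the two Lemmas already established; note that both injectivity hypotheses of the proposition are genuinely used, one for invertibility of $\mathbf{1}-S$ and the other to cancel the factor $\mathbf{1}-T_1^2T_2\cdots T_m$.
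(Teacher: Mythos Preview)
Your proof is correct and follows essentially the same route as the paper: both rely on Lemma~\ref{rntn} together with the two invertibility hypotheses, and both use the inclusion $\mathcal{K}_m\subset\ker R_m$ from Lemma~\ref{ntonplus1}. The only cosmetic difference is that the paper concludes equality by a dimension count (invertibility gives $\dim\mathcal{K}_{m+1}=d\cdot\dim\mathcal{K}_m$ and $\dim\ker R_{m+1}=d\cdot\dim\ker R_m$), whereas you show the reverse inclusion directly by writing $Y=(\mathbf{1}-S)Z$ and cancelling the injective factor $\mathbf{1}-T_1^2T_2\cdots T_m$; these are two phrasings of the same argument.
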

\begin{proof}
Suppose that $\dim\mathcal{H}=d$. If
 $\mathbf{1}_{\mathcal{H}^{\otimes m+1}}-T_1T_2\cdots T_m$, $m\ge 2$ are invertible, by the definition of $\mathcal{K}_m$ we have
\[
\dim\mathcal{K}_m=d\cdot\dim\mathcal{K}_{m-1}=d^{m-2}\cdot\dim\ker R_2.
\]
As ${\mathcal K}_m\subset \ker R_m$ (by Lemma~\ref{ntonplus1})
 it remains to see  that for any $m\ge 2$ one has
\[
\dim\ker R_{m}=d\cdot\dim\ker R_{m-1}=\ldots=d^{m-2}\dim\ker R_2
\]
But this  immediatly follows from the equality
\[
R_{m+1}(\mathbf{1}_{\mathcal{H}^{\otimes m+1}}-T_1T_2\cdots T_m)=(\mathbf{1}_{\mathcal{H}^{\otimes m+1}}-T_1^2T_2\cdots T_m)(R_m\otimes\mathbf{1}_{\mathcal{H}})
\]
and invertibility of the operators
$\mathbf{1}_{\mathcal{H}^{\otimes m+1}}-T_1T_2\cdots T_m$ and
$\mathbf{1}_{\mathcal{H}^{\otimes m+1}}-T_1^2T_2\cdots T_m$.

Hence, $\dim\ker R_m=\dim\ker\mathcal{K}_m$ and
\[
\mathcal{K}_m=\ker R_m,\quad m\ge 2.
\]

\end{proof}

\begin{lemma}\label{1notinsp}
Let $T$ be braided and $||T_1T_2T_1||=q<1$, $||T||=1$. Then $\ker R_m=\mathcal{K}_m$ for any $m\ge 2$.
\end{lemma}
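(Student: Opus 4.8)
The plan is to deduce the assertion from Proposition~\ref{keqr}: it suffices to verify that for every $m\ge 2$ the operators $\mathbf{1}_{\mathcal{H}^{\otimes(m+1)}}-T_1T_2\cdots T_m$ and $\mathbf{1}_{\mathcal{H}^{\otimes(m+1)}}-T_1^2T_2\cdots T_m$ have trivial kernel. Since $\|T\|=1$ we have $\|T_i\|\le 1$ for all $i$, each $T_i$ is self-adjoint, and $\|T_1T_2T_1\|=q$ on every $\mathcal{H}^{\otimes n}$, $n\ge 3$. So I would argue by contradiction: assume $A\xi=\xi$ for a unit vector $\xi\in\mathcal{H}^{\otimes(m+1)}$, where $A$ denotes one of the two operators, and set $\eta_j=T_{j+1}T_{j+2}\cdots T_m\,\xi$ for $j=0,1,\dots,m$, so that $\eta_m=\xi$, $\eta_{j-1}=T_j\eta_j$, and $\eta_0=T_1T_2\cdots T_m\,\xi$; observe that $\eta_0=\xi$ when $A=T_1\cdots T_m$, while $T_1\eta_0=\xi$ when $A=T_1^2T_2\cdots T_m$.

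First I would extract a chain of norm-one vectors. In the first case $\|\eta_0\|=1$; in the second $T_1\eta_0=\xi$ forces $\|\eta_0\|\ge 1$. In both cases, together with $\|\eta_0\|\le\|\eta_1\|\le\cdots\le\|\eta_m\|=1$ (valid because $\|T_j\|\le 1$), this gives $\|\eta_j\|=1$ for all $j$. Then $\|T_j\eta_j\|=\|\eta_{j-1}\|=\|\eta_j\|$ with $\|T_j\|\le 1$ forces $T_j^2\eta_j=\eta_j$, whence $T_j\eta_{j-1}=T_j^2\eta_j=\eta_j$, i.e.\ $T_j$ interchanges $\eta_{j-1}$ and $\eta_j$. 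In particular $T_1^2\eta_0=\eta_0$, so $\xi\in\ker(\mathbf{1}-T_1^2)$ ($\xi=\eta_0$ in the first case, $\xi=T_1\eta_0$ with $T_1$ preserving $\ker(\mathbf{1}-T_1^2)$ in the second), $\eta_1\in\ker(\mathbf{1}-T_1^2)$, and $T_2\eta_1=\eta_2$. The key point is that $T_3,\dots,T_m$ commute with $T_1$ (disjoint index pairs), hence with $T_1^2$, so they preserve $\ker(\mathbf{1}-T_1^2)$; since $\xi$ lies there, so does $\eta_2=T_3\cdots T_m\,\xi$ (for $m=2$, $\eta_2=\xi$).

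Next I would derive the contradiction. On each subspace $\ker(\mathbf{1}-T_i^2)$ the self-adjoint operator $T_i$ is an involution, hence an isometry. Left-multiplying the braid identity $T_2T_1T_2=T_1T_2T_1$ by $T_2$ and using it again gives $T_2^2T_1T_2=T_1T_2T_1^2$, so for $\eta_1\in\ker(\mathbf{1}-T_1^2)$ we obtain $T_2^2(T_1T_2\eta_1)=T_1T_2\eta_1$, i.e.\ $T_1T_2\eta_1\in\ker(\mathbf{1}-T_2^2)$. Therefore, using $T_2T_1T_2=T_1T_2T_1$, the isometry of $T_2$ on $\ker(\mathbf{1}-T_2^2)$, the equality $T_2\eta_1=\eta_2$, and the isometry of $T_1$ on $\ker(\mathbf{1}-T_1^2)$,
\[
\|T_1T_2T_1\eta_1\|=\|T_2(T_1T_2\eta_1)\|=\|T_1T_2\eta_1\|=\|T_1\eta_2\|=\|\eta_2\|=1 .
\]
As $\|\eta_1\|=1$, this contradicts $\|T_1T_2T_1\|=q<1$. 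Hence both families of operators are invertible for all $m\ge 2$, and Proposition~\ref{keqr} yields $\ker R_m=\mathcal{K}_m$ for every $m\ge 2$.

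I expect the genuinely delicate step to be the choice of the vector rather than the final computation: one cannot run the argument with an arbitrary $w\in\ker(\mathbf{1}-T_1^2)\cap\ker(\mathbf{1}-T_2^2)$, since the same chain of equalities only gives $\|T_1T_2T_1w\|=\|T_1T_2w\|\le q$ for such $w$. What breaks this bound is exactly the additional fact $T_2\eta_1=\eta_2\in\ker(\mathbf{1}-T_1^2)$, which relies on the interchange relations and on the commutation of $T_1$ with $T_3,\dots,T_m$; treating $T_1^2T_2\cdots T_m$ in parallel with $T_1T_2\cdots T_m$ costs only the one-line adjustment of the norm bookkeeping at $\eta_0$ noted above.
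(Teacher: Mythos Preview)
Your argument is correct, and it reduces the statement to Proposition~\ref{keqr} exactly as the paper does. The route you take from there, however, is quite different from the paper's. The paper observes the algebraic identity
\[
(T_1T_2\cdots T_n)^2=(T_1T_2T_1)\,(T_3T_4\cdots T_nT_2T_3\cdots T_n),
\]
obtained by commuting $T_1$ past $T_3,\dots,T_n$, and the analogous identity $(T_1^2T_2\cdots T_n)^2=T_1(T_1T_2T_1)(T_3\cdots T_nT_1T_2\cdots T_n)$. Since each factor other than $T_1T_2T_1$ has norm at most $1$, the square has norm at most $q<1$, so the spectral radius is below $1$ and $1$ is not in the spectrum of either operator. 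This is a two-line proof.

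Your approach instead supposes there is a unit eigenvector $\xi$ for the eigenvalue $1$, builds the chain $\eta_j=T_{j+1}\cdots T_m\xi$, uses the equality of norms to force $T_j^2\eta_j=\eta_j$ (via self-adjointness and $\|T_j\|\le1$), and then extracts from the braid relation a unit vector on which $T_1T_2T_1$ acts isometrically, contradicting $\|T_1T_2T_1\|<1$. All steps check out, including the handling of the $T_1^2T_2\cdots T_m$ case and the boundary case $m=2$. The trade-off is that your argument is considerably longer and more delicate, while the paper's factorisation of the square dispatches both operators at once with no case analysis; on the other hand, your method makes transparent exactly which spectral subspace of $T_1$ and $T_2$ a hypothetical fixed vector would have to live in, which is a pleasant structural observation even if not needed here.
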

\begin{proof}
By Propostion~\ref{keqr} it is enough to see  that
\[
1\not\in\sigma(T_1T_2\cdots T_n)\quad\mbox{and}\quad 1\not\in\sigma(T_1^2T_2\cdots T_n).
\]

Indeed, since
$T_iT_j=T_jT_i$, $|i-j|\ge 2$, and $||T_i||=1$, $i=1,\ldots,n$,  we get
\[
(T_1T_2T_3\cdots T_n)^2=(T_1T_2T_1)(T_3T_4\cdots T_nT_2T_3\cdots T_n)
\]
implying
\[
||(T_1T_2\cdots T_n)^2||\le q<1
\]
 and hence $1\not\in\sigma(T_1T_2\cdots T_n)$.

Analogously,
\[
(T_1^2T_2\cdots T_n)^2=T_1(T_1T_2T_1)(T_3T_4\cdots T_nT_1T_2\cdots T_n)
\]
and $||(T_1^2T_2\cdots T_n)^2||\le q<1$ giving $1\notin \sigma(T_1^2T_2\ldots T_n)$.
\end{proof}
In what follows we shall often say ideal ${\mathcal K}_m$ meaning
the ideal generated by ${\mathcal K}_m$.

In general, see Section 3 and Section 4,  largest homogeneous Wick ideals do not coincide with the ideals $\mathcal{K}_m$. However a  direct calculations in {\sc Mathematica} shows that for some Wick algebras, including Wick versions of CCR, twisted CCR, twisted CAR and quonic commutation relations, see \cite{jsw}, the following conjecture is true.
\begin{conjecture}\label{strwide}
If $T$ is braided then
\[
\ker R_{n+1}=(\mathbf{1}_{\mathcal{H}^{\otimes( n+1)}}-T_1T_2\cdots T_n)(\ker R_n\otimes\mathcal{H})+\ker R_{n-2}\otimes\ker R_2.
\]
\end{conjecture}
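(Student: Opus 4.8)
The plan is to prove the conjectured identity by a double inclusion, treating the two summands on the right separately for the ``$\supseteq$'' direction and using a dimension/counting argument together with the structural lemmas for ``$\subseteq$''. First I would verify $\supseteq$. The inclusion $(\mathbf{1}_{\mathcal{H}^{\otimes(n+1)}}-T_1T_2\cdots T_n)(\ker R_n\otimes\mathcal{H})\subseteq\ker R_{n+1}$ is exactly the Corollary following Lemma~\ref{ntonplus1} (equivalently Lemma~\ref{rntn} applied to $X\in\ker R_n$, since then $(R_n\otimes\mathbf{1}_{\mathcal{H}})(X\otimes e_i)=0$). For the second summand, one uses that $\ker R_2$ generates the largest quadratic Wick ideal, so $\ker R_2\subseteq\mathcal{H}^{\otimes 2}$ is annihilated by the whole family $\{T_i\}$ in a suitable sense; more precisely, by the braided structure $\ker R_{n-2}\otimes\ker R_2\subseteq\ker R_{n+1}$ should follow from $R_{n+1}=R_{n-1}\otimes\mathbf{1}_{\mathcal{H}^{\otimes 2}}+(\text{terms ending in }T_nT_{n+1}\text{ or }T_{n-1}\cdots)$ together with $\ker R_2=\ker(\mathbf{1}+T)$ being $T_n$-stable on the last two legs; I would expand $R_{n+1}$ so that each monomial either acts as $R_{n-1}\otimes\mathbf{1}$ on a vector in $\ker R_{n-1}\otimes\mathcal{H}^{\otimes 2}$, or terminates with a factor $T_{n}$ which annihilates $(\mathbf{1}+T)$ on $\ker R_2$ in the last slot. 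This is a routine but slightly delicate bookkeeping computation with the $T_1T_2\cdots T_k$ factors.

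For the reverse inclusion $\subseteq$ the natural strategy, in the spirit of Proposition~\ref{keqr} and Lemma~\ref{1notinsp}, is dimensional. Using Lemma~\ref{rntn},
\[
R_{n+1}(\mathbf{1}_{\mathcal{H}^{\otimes(n+1)}}-T_1T_2\cdots T_n)=(\mathbf{1}_{\mathcal{H}^{\otimes(n+1)}}-T_1^2T_2\cdots T_n)(R_n\otimes\mathbf{1}_{\mathcal{H}}),
\]
so if the relevant operators $\mathbf{1}-T_1T_2\cdots T_n$ and $\mathbf{1}-T_1^2T_2\cdots T_n$ are injective, then $\ker R_{n+1}\supseteq(\mathbf{1}-T_1T_2\cdots T_n)(\ker R_n\otimes\mathcal{H})$ has dimension exactly $d\cdot\dim\ker R_n$, and the conjectured formula would force $\ker R_{n-2}\otimes\ker R_2\subseteq(\mathbf{1}-T_1T_2\cdots T_n)(\ker R_n\otimes\mathcal{H})$, i.e.\ the second summand is redundant — which is \emph{false} in general (that is the whole point of the examples in Sections~3--4). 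Hence the honest route is: compute $\dim\ker R_{n+1}$ directly and show it equals $\dim\bigl[(\mathbf{1}-T_1T_2\cdots T_n)(\ker R_n\otimes\mathcal{H})\bigr]+\dim(\ker R_{n-2}\otimes\ker R_2)$ minus the dimension of the intersection of the two summands. I would identify the intersection explicitly: a vector of the form $(\mathbf{1}-T_1T_2\cdots T_n)(X\otimes e)$ lying in $\ker R_{n-2}\otimes\ker R_2$ should correspond to $X\in(\mathbf{1}-T_1T_2\cdots T_{n-1})(\ker R_{n-1}\otimes\mathcal{H})\cap(\ker R_{n-3}\otimes\ker R_2\otimes\mathcal{H})$ or similar, giving an inductive handle on the dimension count.

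Concretely, I would set $k_n=\dim\ker R_n$ and aim to prove the recursion
\[
k_{n+1}=d\,k_n+k_{n-2}k_2-c_n,
\]
where $c_n=\dim\bigl[(\mathbf{1}-T_1T_2\cdots T_n)(\ker R_n\otimes\mathcal{H})\cap(\ker R_{n-2}\otimes\ker R_2)\bigr]$, and separately establish the same recursion for the dimension of the right-hand side of the conjectured identity; combined with the inclusion $\supseteq$ already proved, equality of dimensions yields equality of spaces. The main obstacle, and the place where some genuine hypothesis on $T$ (braided, plus perhaps $\|T\|\le 1$ or the spectral conditions of Proposition~\ref{keqr} in a weakened form) will have to enter, is controlling $c_n$ and showing $k_{n+1}$ has no ``extra'' contributions beyond these two sources — equivalently, that every $Z\in\ker R_{n+1}$ decomposes as claimed. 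For this I expect to need a retraction/splitting argument: write $Z\in\mathcal{H}\otimes\mathcal{H}^{\otimes n}$, apply $R_{n+1}=\mathbf{1}+T_1(\mathbf{1}_{\mathcal{H}}\otimes R_n)$, peel off one tensor leg to land in $\mathcal{H}\otimes\ker R_n$ modulo the quadratic piece in the last two legs, and then lift back through $\mathbf{1}-T_1T_2\cdots T_n$ using Lemma~\ref{rntn} and the vanishing of the relevant spectral values. Making this peeling-and-lifting rigorous without the full invertibility hypotheses of Proposition~\ref{keqr} — i.e.\ precisely where the $\ker R_{n-2}\otimes\ker R_2$ correction term comes from — is the crux of the argument.
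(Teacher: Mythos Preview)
The paper does not prove this statement: it is explicitly labelled a \emph{Conjecture}, and the only evidence offered is the sentence preceding it, namely that direct computations in \textsc{Mathematica} confirm the formula in several specific examples (Wick CCR, twisted CCR/CAR, quonic relations). There is therefore no ``paper's own proof'' to compare against; any successful argument would be new mathematics, not a reconstruction of something in the text.

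As to your proposal itself, two remarks. First, you did not notice that the statement as printed has a dimension mismatch: $\ker R_{n-2}\otimes\ker R_2$ lives in $\mathcal{H}^{\otimes n}$, not $\mathcal{H}^{\otimes(n+1)}$. The intended index is almost certainly $n-1$ (so that $(n-1)+2=n+1$); this is consistent with the Concluding Remarks, where $\mathcal{K}_2\otimes\mathcal{K}_2\subset\mathcal{I}_4=\ker R_4$ is exactly the case $n=3$ of the corrected formula, and where it is shown that $\mathcal{K}_4\subsetneq\ker R_4$, i.e.\ the second summand is genuinely needed. With the corrected index, the inclusion $\ker R_{n-1}\otimes\ker R_2\subset\ker R_{n+1}$ follows immediately from Proposition~3 (product of homogeneous Wick ideals) together with Proposition~\ref{wickide} ($\ker R_{n+1}$ is the largest homogeneous Wick ideal of degree $n+1$), so your somewhat laboured expansion argument for that piece is unnecessary.

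Second, for the hard direction $\subseteq$ your proposal is honest about its own status: you outline a dimension-count/peeling strategy and then correctly identify that making it work \emph{without} the invertibility hypotheses of Proposition~\ref{keqr} is ``the crux of the argument'' --- but you do not supply that crux. In other words, what you have written is a plan of attack, not a proof, and the missing step (controlling the decomposition of an arbitrary $Z\in\ker R_{n+1}$, or equivalently computing $c_n$ and matching the recursion) is precisely the content of the conjecture. Since the authors themselves leave it open, there is no reason to expect a routine bookkeeping argument to close the gap.
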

\section{Homogeneous ideals of Wick version of quon commutation relations}
Here we apply  results of the previous section to get a
description of homogeneous ideals the Wick algebra,
$\mathcal{A}_2^q$, associated with quon commutation relations with
two degrees of freedom, see \cite{mar}. Recall that
$\mathcal{A}_2^q$ is a $*$-algebra generated by elements $a_i$,
$a_i^*$, $i=1,2$, satisfying commutation relations of the form
\begin{align*}
a_i^*a_i&=1+qa_ia_i^*,\ i=1,2,\\
a_1^*a_2&=\lambda a_2 a_1^*,
\end{align*}
where $q$, $\lambda$ are parameters such that  $0<q<1$,
$|\lambda|=1$. In this case $\dim\mathcal{H}=2$ and the operator $T$
is given by
\begin{align}\label{tquon}
T e_i\otimes e_i & =q e_i\otimes e_i,\ i=1,2,\nonumber\\
T e_1\otimes e_2 & =\overline{\lambda} e_2\otimes e_1,\quad T e_2\otimes e_1=\lambda e_1\otimes e_2
\end{align}
It is easy to verify that $T$ is braided, $||T||=1$ for any $q\in (0,1)$, $\lambda\in\mathbb{C}$, $|\lambda|=1$ and

\[ \ker
(\mathbf{1}_{\mathcal{H}^{\otimes 2}}+T)=\mathbb C\left<
A=e_2\otimes e_1-\lambda e_1\otimes e_2\right>.
\]

\begin{proposition}\label{quonwide}
Let $T\colon\mathcal{H}^{\otimes 2}\rightarrow\mathcal{H}^{\otimes
2}$ be defined by  (\ref{tquon}) and $\dim\mathcal{H}=2$. Then for
any $m\ge 2$, $\ker R_m=\mathcal{K}_m$  is the largest homogeneous
Wick ideal of degree
$m$.
\end{proposition}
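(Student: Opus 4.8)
The plan is to deduce the proposition from Lemma~\ref{1notinsp}, which already contains the whole argument once one verifies its three hypotheses for the quon operator $T$ of~(\ref{tquon}): that $T$ is braided, that $\|T\|=1$, and that $\|T_1T_2T_1\|=q<1$ on $\mathcal{H}^{\otimes 3}$. The first two are precisely the facts recorded in the paragraph preceding the proposition, so the only point requiring a genuine argument is the value of $\|T_1T_2T_1\|$.

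To compute $\|T_1T_2T_1\|$ I would use the grading of $\mathcal{H}^{\otimes k}$ by the number of tensor factors equal to $e_1$. Formula~(\ref{tquon}) shows that $T$ preserves this grading on $\mathcal{H}^{\otimes 2}$ (each summand in~(\ref{tquon}) has the same number of indices equal to $1$ on both sides), hence $T_1$, $T_2$ and $T_1T_2T_1$ preserve it on $\mathcal{H}^{\otimes 3}$. Writing $e_{ijk}=e_i\otimes e_j\otimes e_k$, the graded pieces of $\mathcal{H}^{\otimes 3}$ are $\mathbb{C}e_{111}$, $\mathbb{C}\langle e_{221},e_{212},e_{122}\rangle$, $\mathbb{C}\langle e_{112},e_{121},e_{211}\rangle$ and $\mathbb{C}e_{222}$, of dimensions $1,3,3,1$. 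On the two one-dimensional pieces $T_1T_2T_1$ acts as the scalar $q^3$. On each three-dimensional piece I would write $T_1$ and $T_2$ as explicit $3\times 3$ monomial matrices in the displayed basis --- each has a single diagonal entry equal to $q$ and the remaining two nonzero entries of modulus $1$ --- and multiply them out; the product $T_1T_2T_1$ then comes out equal to $q$ times a monomial matrix all of whose nonzero entries have modulus $1$, i.e. $q$ times a unitary, so its norm on that piece is exactly $q$. Since $T_1T_2T_1$ is block diagonal for the grading, $\|T_1T_2T_1\|=\max\{q^3,q\}=q$, and $q<1$ by assumption.

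With these three hypotheses checked, Lemma~\ref{1notinsp} gives $\mathcal{K}_m=\ker R_m$ for every $m\ge 2$; and since $T$ is braided, the last assertion of Proposition~\ref{wickide} shows that $\ker R_m$ generates the largest homogeneous Wick ideal of degree $m$. Combining the two yields the claim. The only actual computation here is the pair of $3\times 3$ matrix products, so I do not expect a real obstacle; the one place to be careful is the bookkeeping of the phases $\lambda$, $\overline{\lambda}$, making sure they cancel so that $T_1T_2T_1$ is genuinely $q$ times a unitary and not merely an operator of norm $\le q$ obtained from a cruder estimate. One could equivalently verify the hypotheses of Proposition~\ref{keqr} directly, namely $1\notin\sigma(T_1T_2\cdots T_m)$ and $1\notin\sigma(T_1^2T_2\cdots T_m)$, but Lemma~\ref{1notinsp} packages exactly this reduction.
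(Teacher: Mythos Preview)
Your proposal is correct and follows essentially the same route as the paper's proof: reduce to Lemma~\ref{1notinsp} and verify $\|T_1T_2T_1\|=q<1$ by explicit computation on the eight standard basis vectors of $\mathcal{H}^{\otimes 3}$. Your organization of the computation via the grading by the number of $e_1$-factors is a mild repackaging of the paper's direct basis-by-basis listing, but the underlying calculation and conclusion are identical.
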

\begin{proof}
By Lemma~\ref{1notinsp} it is enough to show  that
$||T_1T_2T_1||<1$. Indeed, it is  easy to see that for the standard
orthonormal basis of $\mathcal{H}^{\otimes 3}$ one has
\begin{align*}
T_1T_2T_1 e_i\otimes e_i\otimes e_i&=q^3 e_i\otimes e_i\otimes e_i,\quad i=1,2\\
T_1T_2T_1 e_1\otimes e_1\otimes e_2&=q\overline{\lambda}^2 e_2\otimes e_1\otimes e_1,\quad
T_1T_2T_1 e_2\otimes e_1\otimes e_1=q\lambda^2 e_1\otimes e_1\otimes e_2\\
T_1T_2T_1 e_2\otimes e_2\otimes e_1&=q\lambda^2 e_1\otimes e_2\otimes e_2,\quad
T_1T_2T_1 e_1\otimes e_2\otimes e_2=q\overline{\lambda}^2 e_2\otimes e_2\otimes e_1\\
T_1T_2T_1 e_1\otimes e_2\otimes e_1&=q\  e_1\otimes e_2\otimes e_1,\quad\ \
T_1T_2T_1 e_2\otimes e_1\otimes e_2=q\ e_2\otimes e_1\otimes e_2.
\end{align*}
Hence $||T_1T_2T_1||=q<1$.
\end{proof}
\begin{remark}
\begin{enumerate}
\item For Wick quonic relations with
three generators Lemma~\ref{1notinsp} cannot be applied, since in this case $||T_1T_2T_1||=1$ . However, since $T$ is a braided contraction we have by Proposition \ref{cubide}
$$\ker R_3=(\mathbf{1}_{\mathcal{H}^{\otimes 3}}-T_1T_2)(\ker
R_2\otimes\mathcal{H})
$$
and  one can apply
Proposition \ref{keqr} to show that in this case
$\mathcal{K}_m=\ker R_m$, $m\ge 2$ as well.
\item Computations in {\sc Mathematica} show that for  Wick qounic relations with four or more generators
the ideals $\mathcal{K}_m$ do not coincide with $\ker R_m$ for
$m>3$.
\end{enumerate}
\end{remark}
\subsection{$*$-Representations of $\mathcal{A}_2^q$, annihilating homogeneous ideals}
In this section  we show that any $*$-representation of the Wick
quonic relations annihilating $\mathcal{K}_m$ for some fixed $m\ge
2$ annihilates the ideal $\mathcal{K}_2$.

First we recall that for any bounded $*$-representation $\pi$ of
$\mathcal{A}_2^q$ one has $\pi(\mathcal{K}_2)=0$, see \cite{osam}.
Indeed, it easy to verify, that if $A=a_2a_1-\lambda a_1a_2$, then
\[
a_1^*A=\lambda qAa_1^*,\quad a_2^* A=\overline{\lambda}q Aa_2^*
\]
implying that $A^*A=q^2AA^*$. Evidently, the only bounded operator
$A$ satisfying  such relation is the zero one.
\begin{proposition}
Let $\pi$ be an irreducible $*$-representation (possibly unbounded)
of $\mathcal{A}_2^q$ such that $\pi(\mathcal{K}_m)=\{0\}$ for some
$m\ge 3$. Then $\pi(A)=0$ and hence $\pi(\mathcal K_2)=0$.
\end{proposition}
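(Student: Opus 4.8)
The plan is to deduce from $\pi(\mathcal{K}_m)=\{0\}$ that some power of $\pi(A)$ vanishes, and then to conclude $\pi(A)=0$ using only the single relation $A^*A=q^2AA^*$ together with positivity.

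First I would observe that, since $\pi$ is an algebra homomorphism, $\pi(\mathcal{K}_m)=\{0\}$ forces $\pi$ to vanish on the whole two-sided ideal generated by $\mathcal{K}_m$ in $\mathcal{T}(\mathcal{H})$. Because the ideals $\langle\mathcal{K}_j\rangle$ form a decreasing chain (recalled in Section~3 via $\mathcal{K}_{j+1}\subset\mathcal{H}\otimes\mathcal{K}_j+\mathcal{K}_j\otimes\mathcal{H}$), we get $\pi(\mathcal{K}_j)=\{0\}$ for every $j\ge m$; in particular $\pi(\mathcal{K}_{2k})=\{0\}$ whenever $2k\ge m$. Next, recall $\mathcal{K}_2=\mathbb{C}A$; since the tensor product of homogeneous Wick ideals is again a homogeneous Wick ideal, $\mathcal{K}_2^{\otimes k}=\mathbb{C}\,A^{\otimes k}$ is a homogeneous Wick ideal of degree $2k$, hence by Proposition~\ref{quonwide} it is contained in the largest one, $\mathcal{K}_{2k}$. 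As the product $A^{\otimes k}$ in $\mathcal{T}(\mathcal{H})$ is just $A^k$, this gives $A^k\in\mathcal{K}_{2k}$, and therefore $\pi(A)^k=\pi(A^k)=0$.

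It then remains to pass from nilpotency of $\pi(A)$ to $\pi(A)=0$. Put $B=\pi(A)$, $C=\pi(A^*)$ and $G=\pi(A^*A)=CB$, all acting on the common invariant domain $D$ of $\pi$. From $AA^*=q^{-2}A^*A$ one gets $(A^*A)^k=q^{-k(k-1)}(A^*)^kA^k$ in $\mathcal{A}_2^q$, so $G^k=q^{-k(k-1)}C^kB^k=0$. On the other hand $G$ is symmetric on $D$ (since $(A^*A)^*=A^*A$) and $\langle G\xi,\xi\rangle=\langle CB\xi,\xi\rangle=\langle B\xi,B\xi\rangle=\|B\xi\|^2\ge 0$ for $\xi\in D$, where I used $\langle\pi(A^*)u,v\rangle=\langle u,\pi(A)v\rangle$ for $u,v\in D$ and $B\xi\in D$. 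A positive symmetric operator $G$ on $D$ with $G^k=0$ must be zero: taking a power of two $2^r\ge k$ (so $G^{2^r}=0$ too) and using $\|G^{2^{r-1}}\xi\|^2=\langle G^{2^r}\xi,\xi\rangle=0$, one halves the exponent repeatedly until $G=0$. Then $\|\pi(A)\xi\|^2=\langle G\xi,\xi\rangle=0$ for all $\xi\in D$, so $\pi(A)=0$, and hence $\pi(\mathcal{K}_2)=\pi(\mathbb{C}A)=\{0\}$.

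The only delicate point is the functional-analytic bookkeeping in the unbounded setting: each step above — $\langle CB\xi,\xi\rangle=\|B\xi\|^2$, symmetry and positivity of $G$, and $G^k$ being a scalar multiple of $C^kB^k$ — must be carried out on the invariant domain $D$ and may use only $\pi(x^*)\subset\pi(x)^*$, not self-adjointness of the closures of the individual operators. I would also remark that irreducibility of $\pi$ is in fact not needed; the argument shows that \emph{any} $*$-representation annihilating some $\mathcal{K}_m$ with $m\ge 3$ annihilates $\mathcal{K}_2$.
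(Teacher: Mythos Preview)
Your argument is correct, and its first half---reducing to $\pi(A)^k=0$ via $A^{\otimes k}\in\mathcal{K}_{2k}$ and the inclusion $\langle\mathcal{K}_{2k}\rangle\subset\langle\mathcal{K}_m\rangle$ for $2k\ge m$---is essentially the paper's own reduction.

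The genuine difference lies in how you pass from nilpotency of $\pi(A)$ to $\pi(A)=0$. The paper argues via irreducibility: from $\pi(A)^k=0$ one has $\ker\pi(A)\neq\{0\}$; the relation $A^*A=q^2AA^*$ gives $\ker\pi(A)=\ker\pi(A^*)$, and the commutation relations $Aa_i^*=\mu_i a_i^*A$, $A^*a_i=\nu_i a_iA^*$ (with $|\mu_i|=|\nu_i|=q^{\pm1}$) show this common kernel is invariant under all $\pi(a_i),\pi(a_i^*)$, hence equals the whole space. You instead use only the single relation $A^*A=q^2AA^*$ to rewrite $(A^*A)^k$ as a scalar multiple of $(A^*)^kA^k$, and then kill the symmetric positive operator $G=\pi(A^*A)$ by the standard exponent-halving trick. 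Your route is a little more self-contained (it does not invoke the four extra commutation relations with $a_i,a_i^*$) and, as you observe, it dispenses with irreducibility altogether; the paper's route, on the other hand, makes transparent the invariant-subspace structure and is the natural argument if one is already thinking representation-theoretically. Both are short and clean.
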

\begin{proof}
By  Propositions \ref{quonwide} for any $m\ge 3$ the
ideal $\mathcal{K}_m$ coincides with the largest homogeneous ideal
of degree $m$.

Let $m=2k$, for some $k>1$. Then, since the  product of homogeneous
Wick ideals is a homogeneous Wick ideal, we get
\[
(\ker R_2)^{\otimes k}\subset\ker R_{2k}=\mathcal{K}_m.
\]
So if $\pi(\mathcal{K}_m)=\{0\}$, then $\pi(A^k)=0$ and hence
$\ker\pi(A)\ne\{0\}$.  Further, $A^*A=q^2 AA^*$ implies that
$\ker\pi(A)=\ker\pi(A^*)$ and from
\[
Aa_1^*=\overline{\lambda}q^{-1}a_1^*A,\ Aa_2^*=\lambda q^{-1}a_2^*A,\ A^*a_1=\overline{\lambda}qa_1A^*,\ A^*a_2=\lambda q a_2A^*
\]
we obtain that $\ker\pi(A)=\ker\pi(A^*)$ is invariant with respect to $\pi(a_i)$ and $\pi(a_i^*)$, $i=1,2$.
 Thus if $\pi$ is irreducible, $\pi(A)=\{0\}$.

Suppose now that $\pi(\mathcal{K}_m)=\{0\}$ and $m=2k+1$, for fixed
$k\ge 1$. Then as above $A^{k+1}\in\mathcal{K}_{m+1}$. Since
$\langle\mathcal{K}_{m+1}\rangle\subset\langle \mathcal{K}_m\rangle$  we get $\pi(A^{k+1})=0$ and
repeating the  arguments from the previous paragraph we obtain
$\pi(A)=0$.
\end{proof}

We refer the reader to \cite{schmudgen} for definitions and facts
about unbounded $*$-representations of $*$-algebras. Note that such
representations can be rather complicated and one usually restricts
oneself to a subclass of "well-behaved" representations. For Lie
algebras natural well-behaved  representations are
integrable representations i.e. those which can be integrated to a
unitary representation of the corresponding Lie group (see for
example \cite[Section 10]{schmudgen}).

\section{$*$-Representations of Wick version of CCR annihilating homogenenous ideals}
In this Section we consider a Wick version of CCR, denoted below by
$\mathcal{A}_d^0$, and  given by
\[
\mathcal{A}_d^0=\mathbb{C}\left<a_i,\ a_i^* \mid
a_i^*a_j=\delta_{ij}\mathbf{1}+a_ja_i^*,\ i,j=1,\ldots, d\right>.
\]
In this case $T$ is the  flip operator
\[
T e_i\otimes e_j=e_j\otimes e_i,\ i,j=1,\ldots,d
\]
and the largest quadratic ideal $\mathcal{K}_2=\ker R_2$ is
generated by the elements
\[
A_{ij}=e_j\otimes e_i-e_i\otimes e_j,\quad i\ne j,\ i,j=1,\ldots,d.
\]
The action of the operator $T_1T_2\cdots T_k$ on a product of the form
$B\otimes e_i$, $B\in{\mathcal{H}^{k}}$, $i=1,\ldots,d$, is the
following
\[
(T_1T_2\cdots T_k)(B\otimes e_i)=e_i\otimes B,\quad i=1,\ldots,d.
\]
Thus if the homogeneous Wick ideal $\mathcal{K}_m$ is generated by a
family $\{B_j,\ j\in\mathcal{J}\}$, then
\[
\mathcal{K}_{m+1}=\bigl<e_i\otimes B_j-B_j\otimes e_i,\ i=1,\ldots,d,\ j\in\mathcal{J}\bigr>
\]

Recall that
\[
e_i^*\otimes B_j=\mu_0(e_i^*)\bigl(R_m B_j+\sum_{k=1}^d T_1T_2\cdots
T_m (B_j\otimes e_k)\otimes e_k^*\bigr),\ i=1,\ldots,d,\
j\in\mathcal{J}.
\]
Since
\[
T_1T_2\cdots T_m(B_j\otimes e_k)=e_k\otimes B_j,\quad R_nB_j=0,
\]
and $\mu_0(e_i^*)e_k\otimes X=\delta_{ik}X$ for any  $X\in\mathcal{T}(\mathcal{H})$,  we get
\[
e_i^*\otimes B_j=B_j\otimes e_i^*,\quad i=1,\ldots,d,\ j\in\mathcal{J}.
\]

In other words if we consider the quotient of $\mathcal{A}_d^0$ by
the homogeneous Wick ideal $\mathcal{K}_{m+1}$ we obtain the
following commutation relations between generators of the algebra
and generators of the ideal $\mathcal{K}_{m}$
\[
a_i^*B_j=B_ja_i^*,\quad a_iB_j=B_ja_i,\ i=1,\ldots,d,\ j\in\mathcal{J}.
\]

We intend to study representations of $\mathcal{A}_2^0$ annihilating
the ideals $\mathcal{K}_m$, $m=2,3,4$.

\subsection{Representations of $\mathcal{A}_2^0$ annihilating quadratic and cubic ideals}
Below we assume $d=2$. The quadratic ideal $\mathcal{K}_2$ is
generated by $a_1\otimes a_2 - a_2\otimes a_1$ and the quotient
$\mathcal{A}_2^0/\mathcal{K}_2$ is the Weyl algebra with two degrees
of freedom. Note that it is a quotient of the universal enveloping
of the Heisenberg algebra. The unique irreducible well-behaved
representation of the Weyl algebra (by well-behaved we mean a
representation which can be integrated to a unitary representation
of the Heisenberg Lie group), 
is the Fock representation: the space of the
representation is
$\mathcal{H}=l_2(\mathbb{Z}_{+})\otimes l_2(\mathbb{Z}_{+})$ and
\[
a_1=a\otimes\mathbf{1},\quad a_2=\mathbf{1}\otimes a,
\]
where $a e_n=\sqrt{n+1}e_{n+1}$, $n\in\mathbb{Z}_{+}$, and $\{e_n,\
n\in\mathbb{Z}_+\}$ is the standard orthonormal basis in $l_2(\mathbb{Z}_{+})$.

Now we study irreducible representations of
$\mathcal{A}_2^0$ which annihilate the ideal $\mathcal{K}_3$.
The ideal $\mathcal{K}_3$ is generated by the elements
\[
Aa_1-a_1A,\quad Aa_2-a_2A
\]
with $A=a_2a_1-a_1a_2$.

Since $a_i^* A=A a_i^*$, $i=1$, $2$, we conclude that $A$ belongs to the center of the quotient $\mathcal{A}_2^0/\mathcal{K}_3$.

For a well-behaved irreducible representation $\pi$, we assume that
$A$ commutes with $a_i$, $a_i^*$ strongly (i.e. $A$ is closable on
the domain of the representation and if $A=U|A|$ is the polar
decompostion of $A$ then $U$ and all spectral projections of $|A|$
belongs to the strong commutant of the family $\{a_1,a_1^*, a_2,
a_2^*\}$, \cite{schmudgen}) 
and by the Schur lemma we
have $A=x\mathbf1$, $x\in \mathbb C$ (we denote the operators of the
representation by the same letters as the corresponding elements of
the algebra). Thus, the problem of classification of such
irreducible representations is reduced to the classification of
irreducible representations  of the following family of commutation
relations
\begin{align}\label{ccrk3}
&a_i^*a_i-a_ia_i^*=\mathbf1,\quad i=1,2,\nonumber\\
&a_1^*a_2=a_2a_1^*\quad
a_2a_1-a_1a_2=x\mathbf1.
\end{align}
Denote by $A_{2,x}$ the $*$-algebra generated by relations (\ref{ccrk3}) and by $A_{2,0}$  the $*$-algebra generated by CCR with two degrees of freedom.
\begin{proposition}\label{axa2}
The $*$-algebras $A_{2,x}$ and $A_{2,0}$ are isomorphic for any
$x\in\mathbb{C}$.
\end{proposition}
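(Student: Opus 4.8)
The plan is to exhibit an explicit $*$-isomorphism $\varphi\colon A_{2,0}\to A_{2,x}$ by a linear change of generators. The relations (\ref{ccrk3}) say that $a_1,a_1^*$ and $a_2,a_2^*$ each satisfy CCR, that $a_1^*$ commutes with $a_2$ (hence, taking adjoints, $a_1$ commutes with $a_2^*$), and that the commutator $[a_2,a_1]=a_2a_1-a_1a_2$ equals the scalar $x\mathbf1$. So the only discrepancy from the genuine two-degrees-of-freedom CCR algebra $A_{2,0}$ is the nonzero value of $[a_2,a_1]$. I would look for new generators of the form $b_1=a_1$, $b_2=a_2+\alpha a_1^*$ for a suitable constant $\alpha\in\mathbb C$ (chosen so that the unwanted commutator is cancelled), and check that $b_1,b_1^*,b_2,b_2^*$ satisfy the defining relations of $A_{2,0}$, i.e.
\[
b_i^*b_i-b_ib_i^*=\mathbf1\ (i=1,2),\qquad b_1^*b_2=b_2b_1^*,\qquad b_2b_1-b_1b_2=0 .
\]

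The key computation is the last relation: using $[a_2,a_1]=x\mathbf1$ and $[a_1^*,a_1]=a_1^*a_1-a_1a_1^*=-\mathbf 1$ (note the sign convention forced by $a_i^*a_i-a_ia_i^*=\mathbf1$) one gets
\[
b_2b_1-b_1b_2=(a_2+\alpha a_1^*)a_1-a_1(a_2+\alpha a_1^*)=[a_2,a_1]+\alpha[a_1^*,a_1]=x\mathbf1-\alpha\mathbf1,
\]
so choosing $\alpha=x$ makes $b_1$ and $b_2$ commute. One then verifies the remaining relations: $b_1^*b_1-b_1b_1^*=a_1^*a_1-a_1a_1^*=\mathbf1$ trivially; for the second CCR relation one expands $b_2^*b_2-b_2b_2^*$ with $b_2=a_2+xa_1^*$, $b_2^*=a_2^*+\bar x a_1$, and uses that $a_1$ commutes with $a_2^*$, $a_1^*$ commutes with $a_2$, together with both CCR relations, so that the cross terms cancel and only $a_2^*a_2-a_2a_2^*=\mathbf1$ survives; and $b_1^*b_2=a_1^*(a_2+xa_1^*)=a_2a_1^*+x(a_1^*)^2=(a_2+xa_1^*)a_1^*=b_2b_1^*$ using $a_1^*a_2=a_2a_1^*$. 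This shows $\varphi\colon a_i\mapsto b_i$ extends to a $*$-homomorphism $A_{2,0}\to A_{2,x}$.

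Finally I would argue it is an isomorphism by producing the inverse: the map is visibly invertible as a linear substitution, $a_1=b_1$, $a_2=b_2-xb_1^*$, so the symmetric assignment $\psi\colon A_{2,x}\to A_{2,0}$, $a_1\mapsto b_1$, $a_2\mapsto b_2-xb_1^*$ (where now $b_1,b_2$ denote the generators of $A_{2,0}$) is a $*$-homomorphism by the same computation with $x$ replaced by $-x$ in the role above, and $\varphi\circ\psi$, $\psi\circ\varphi$ are the identity on generators, hence on the whole algebra. I do not expect a genuine obstacle here — the statement is essentially a Bogoliubov-type linear change of variables — the only point requiring care is bookkeeping of the sign/ordering conventions in the CCR relation $a_i^*a_i-a_ia_i^*=\mathbf1$ so that the constant $\alpha$ comes out to exactly $x$ (and not $-x$ or $\bar x$), and checking that no new relations are forced, which follows because the substitution is a bijection of generating sets respecting the $*$-structure.
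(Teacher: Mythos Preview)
Your overall strategy---a Bogoliubov-type linear substitution $b_1=a_1$, $b_2=a_2+\alpha a_1^*$---is exactly the paper's approach, but your execution contains two concrete errors.

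First, a sign slip: the defining relation is $a_i^*a_i-a_ia_i^*=\mathbf1$, so $[a_1^*,a_1]=a_1^*a_1-a_1a_1^*=+\mathbf1$, not $-\mathbf1$. Hence $b_2b_1-b_1b_2=[a_2,a_1]+\alpha[a_1^*,a_1]=(x+\alpha)\mathbf1$, and the correct choice is $\alpha=-x$, i.e.\ $b_2=a_2-xa_1^*$.

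Second, and more seriously, your verification of $b_2^*b_2-b_2b_2^*=\mathbf1$ is incorrect. You assert that ``the cross terms cancel'', but in $A_{2,x}$ the generators $a_1,a_2$ do \emph{not} commute (only $a_1^*,a_2$ do), so $a_1a_2-a_2a_1=-x\mathbf1$ and $a_2^*a_1^*-a_1^*a_2^*=-\bar x\mathbf1$ are nonzero; moreover the diagonal piece $|\alpha|^2(a_1a_1^*-a_1^*a_1)=-|\alpha|^2\mathbf1$ does not vanish either. With $\alpha=-x$ one finds
\[
b_2^*b_2-b_2b_2^*=(1+|x|^2)\mathbf1,
\]
not $\mathbf1$. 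Thus $b_1,b_2$ as you defined them do \emph{not} satisfy CCR, and your map is not a $*$-homomorphism into $A_{2,x}$.

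The remedy, which is what the paper does, is to rescale: set
\[
d_1=a_1,\qquad d_2=(1+|x|^2)^{-1/2}\bigl(a_2-xa_1^*\bigr).
\]
Then $d_1d_2=d_2d_1$, $d_1^*d_2=d_2d_1^*$, and the factor $(1+|x|^2)^{-1}$ exactly absorbs the extra $|x|^2$ so that $d_2^*d_2-d_2d_2^*=\mathbf1$. The inverse substitution $b_1=c_1$, $b_2=(1+|x|^2)^{1/2}c_2+xc_1^*$ then furnishes the isomorphism in the other direction.
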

\begin{proof}
For any fixed $x\in\mathbb{C}$ let
\[
d_1=a_1\quad\mbox{and}\quad d_2=\Bigl(1+|x|^2\Bigr)^{-\frac{1}{2}}a_2-xa_1^*.
\]
Then it is easy to verify that $d_1$, $d_2$ generate $A_{2,x}$ and
\begin{equation}\label{2ccr}
d_i^*d_i-d_id_i^*=1,\ i=1,2,\quad d_1^*d_2=d_2d_1^*,\quad d_2d_1=d_1d_2.
\end{equation}

Conversely, let $c_1$, $c_2$ be generators of $A_{2,0}$  satisfying
(\ref{2ccr}). Put
\[
b_1=c_1,\quad b_2=\Bigl(1+|x|^2\Bigr)^{\frac{1}{2}}c_2+xc_1^*
\]
Then $b_1$, $b_2$ satisfy (\ref{ccrk3}) and generate $A_{2,0}$. Hence $A_{2,x}\simeq A_{2,0}$.
\end{proof}
It follows from the uniqueness of irreducible well-behaved
representation of CCR with two degrees of freedom that there exists
a unique, up to a unitary equivalence, irreducible representation
of (\ref{2ccr}) defined on $l_2(\mathbb{Z}_{+})^{\otimes 2}$ by the
formulas
\begin{equation*}
d_1=a\otimes\mathbf{1},\quad d_2=\mathbf{1}\otimes a.
\end{equation*}

Below by {\it well-behaved} representation of $A_{2,x}$ we mean a
well-behaved representation of $A_{2,0}\simeq A_{2,x}$. Applying
Proposition \ref{axa2} we get the following result.
\begin{theorem}\label{repcubid}
For any $x\in\mathbb{C}$ there exists a unique, up to  unitary
equivalence, irreducible well-behaved representation of $A_{2,x}$
given  by
\begin{align*}
a_1&=a\otimes\mathbf{1},\\
a_2&=\sqrt{1+|x|^2}\mathbf{1}\otimes a+x a^*\otimes\mathbf{1}.
\end{align*}
\end{theorem}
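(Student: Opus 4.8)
The plan is to combine the two structural results already in hand: Proposition~\ref{axa2}, which gives a $*$-isomorphism $A_{2,x}\simeq A_{2,0}$ for every $x\in\mathbb C$, and the uniqueness of the irreducible well-behaved representation of the two-fold CCR algebra $A_{2,0}$ on $l_2(\mathbb Z_+)^{\otimes 2}$. By definition a well-behaved representation of $A_{2,x}$ is one that becomes well-behaved under this isomorphism, so the classification of irreducible well-behaved representations of $A_{2,x}$ is transported verbatim from that of $A_{2,0}$; in particular there is a unique one up to unitary equivalence. The only real work is to track the isomorphism explicitly and read off what the generators $a_1,a_2$ of $A_{2,x}$ look like on the Fock space.

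Concretely, I would start from the generators $d_1,d_2$ of $A_{2,x}$ defined in the proof of Proposition~\ref{axa2} by $d_1=a_1$, $d_2=(1+|x|^2)^{-1/2}a_2-xa_1^*$, which satisfy the CCR-type relations \eqref{2ccr}. The unique irreducible well-behaved representation realizes $d_1=a\otimes\mathbf 1$ and $d_2=\mathbf 1\otimes a$ on $l_2(\mathbb Z_+)^{\otimes 2}$. Now I invert the linear change of variables: from $d_1=a_1$ we get $a_1=a\otimes\mathbf 1$, and from $d_2=(1+|x|^2)^{-1/2}a_2-xa_1^*$ we solve
\[
a_2=(1+|x|^2)^{1/2}\bigl(d_2+x d_1^*\bigr)=(1+|x|^2)^{1/2}\,\mathbf 1\otimes a+x(1+|x|^2)^{1/2}\,a^*\otimes\mathbf 1 .
\]
This should reproduce exactly the formulas in the statement of Theorem~\ref{repcubid} — modulo checking the normalizing constants, which is the one place where a sign or a power of $(1+|x|^2)^{\pm1/2}$ could slip. (Indeed one should double-check the coefficient of $a^*\otimes\mathbf 1$ against the stated $x$; if there is a discrepancy it is a matter of how the square-root factors are distributed in Proposition~\ref{axa2}, not a substantive issue.)

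It then remains to note that these operators genuinely satisfy \eqref{ccrk3} — a short direct computation using $[a,a^*]=\mathbf 1$ on each tensor factor, the fact that operators on different factors commute, and $A=a_2a_1-a_1a_2 = x\,\mathbf 1$ by construction — and that the representation so obtained is irreducible and well-behaved because it is the image of the unique such representation of $A_{2,0}$ under the isomorphism. Uniqueness up to unitary equivalence likewise descends from $A_{2,0}$: any two irreducible well-behaved representations of $A_{2,x}$ pull back to irreducible well-behaved representations of $A_{2,0}$, which are unitarily equivalent by the Stone--von Neumann theorem, and the intertwining unitary is the same on the $A_{2,x}$ side. The main obstacle is purely bookkeeping: getting the explicit substitution and its inverse right, including the constants, so that the formulas in Theorem~\ref{repcubid} match those produced by inverting the map of Proposition~\ref{axa2}; there is no analytic difficulty beyond invoking the already-cited uniqueness for CCR.
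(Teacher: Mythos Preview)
Your approach is exactly the paper's: the theorem is stated immediately after the remark that applying Proposition~\ref{axa2} together with the uniqueness of the irreducible well-behaved representation of CCR (realised as $d_1=a\otimes\mathbf 1$, $d_2=\mathbf 1\otimes a$) yields the result, and no further proof is given. Your suspicion about the constants is well founded: the forward formula $d_2=(1+|x|^2)^{-1/2}a_2-xa_1^*$ in Proposition~\ref{axa2} contains a typo (the factor $(1+|x|^2)^{-1/2}$ should multiply the whole expression $a_2-xa_1^*$); if you use the paper's \emph{inverse} formula $b_2=(1+|x|^2)^{1/2}c_2+xc_1^*$ instead, you recover the stated $a_2=\sqrt{1+|x|^2}\,\mathbf 1\otimes a+x\,a^*\otimes\mathbf 1$ on the nose.
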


Evidently in the case $x=0$ we get the Fock representation, annihilating $\mathcal{K}_2$.
\subsection{Representations annihilating $\mathcal{K}_4$}
Let us describe  representations of $\mathcal{A}_2^0$ which
annihilate the ideal $\mathcal{K}_4$. Recall that
\[
\mathcal{K}_4=\langle B_i a_j-a_j B_i,\quad i,j=1,2\rangle,
\]
where $B_i=A a_i-a_iA$, $i=1,2$, are generators of $\mathcal{K}_3$.
  Since
\[
a_j^*B_i=B_ia_j^*,\ i=1,2,
\]
the elements $B_1$, $B_2$ belong to the center of the quotient $\mathcal{A}_2^0/\mathcal{K}_4$. Identifying again the elements with their images in a representation
$\pi$ annihilating  $\mathcal K_4$ we require that for a well-behaved irreducible representation
\[
B_1=Aa_1-a_1A=x_1\mathbf{1},\quad B_2=Aa_2-a_2A=x_2\mathbf{1}
\]
for some $x_1$, $x_2\in\mathbb{C}$.
Note  also that in $\mathcal{A}_2^0$ we have $a_i^*A=Aa_i^*$, $i=1,2$.

\subsubsection{Representations with $x_1\ne 0$.}
Fix  $(x_1,x_2)\in\mathbb{C}^2$ with $x_1\ne 0$ and consider the $*$-algebra $A_{x_1,x_2}$, generated by elements $a_1$, $a_2$, $A$ satisfying the following commutation relations
\begin{gather}
a_i^*a_i-a_ia_i^*=1,\nonumber\\
a_1^*a_2=a_2a_1^*,\quad A=a_2a_1-a_1a_2,
\label{abasic}
\\
Aa_i-a_iA=x_i1,\quad a_i^*A=Aa_i^*,\quad i=1,2 \nonumber.
\end{gather}
Let
\begin{align}\label{c1c2c3}
d_1&=a_1\nonumber\\
d_2&=|x_1|^{-1}(A-x_1a_1^*)\\
d_3&=\Bigl(1+\frac{|x_2|^2}{|x_1|^2}\Bigr)^{-\frac{1}{2}}\Bigl(a_2+
\frac{x_2}{|x_1|}d_2^*
-\frac{\overline{x}_1}{2} d_2^2-|x_1|d_1^*d_2-
\frac{x_1}{2} (d_1^*)^2\Bigr)\nonumber
\end{align}
Below we show that the elements $d_i$, $i=1,2,3$, generate
$A_{x_1,x_2}$ and satsify CCR with three degrees of freedom.

First we establish some commutation relations between $a_i$ and
$d_j$, $i,j=1,2$.
\begin{lemma}
The elements $a_1$, $a_2$, $d_1$, $d_2$ satisfy the following relations
\begin{align}\label{adcomrel}
d_1^*a_2&=a_2d_1^*,\nonumber\\
a_2d_1-d_1a_2&=|x_1|d_2+x_1d_1^*,\nonumber\\
a_2^*d_2&=d_2a_2^*+x_1d_2^*+|x_1|d_1,\\
a_2d_2&=d_2a_2-\frac{x_2}{|x_1|}.\nonumber
\end{align}
\end{lemma}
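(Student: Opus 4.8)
The plan is to verify each of the four identities in \eqref{adcomrel} directly from the defining relations \eqref{abasic} of $A_{x_1,x_2}$, together with the definitions $d_1=a_1$ and $d_2=|x_1|^{-1}(A-x_1a_1^*)$ in \eqref{c1c2c3}. Before touching $a_2$, I would record the elementary consequences of \eqref{abasic} that I will need repeatedly: from $a_1^*a_1-a_1a_1^*=1$ we get the CCR pair $[a_1,a_1^*]=1$; from $A=a_2a_1-a_1a_2$ and $a_1^*a_2=a_2a_1^*$ one computes $a_1^*A-Aa_1^*=0$ (this is already listed in \eqref{abasic}, but it is worth re-deriving to fix signs); and the two relations $Aa_1-a_1A=x_11$, $Aa_2-a_2A=x_21$. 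I would also note $d_2^*=|x_1|^{-1}(A^*-\bar x_1a_1)$, since $A^*$ will appear when commuting with $a_2^*$.

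First I would prove the two relations not involving $a_2^*$. For $a_2d_1-d_1a_2$: since $d_1=a_1$, this is $a_2a_1-a_1a_2=A$, and then I rewrite $A=|x_1|d_2+x_1a_1^*=|x_1|d_2+x_1d_1^*$ straight from the definition of $d_2$. For $a_2d_2-d_2a_2$: expand $d_2=|x_1|^{-1}(A-x_1a_1^*)$, so $[a_2,d_2]=|x_1|^{-1}([a_2,A]-x_1[a_2,a_1^*])$; here $[a_2,A]=a_2A-Aa_2=-x_21$ by the third line of \eqref{abasic}, and $[a_2,a_1^*]=0$ because $a_1^*a_2=a_2a_1^*$ (one must be slightly careful: the relation gives $a_1^*a_2=a_2a_1^*$, hence $a_2a_1^*-a_1^*a_2=0$, i.e. $[a_2,a_1^*]=0$). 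This yields $a_2d_2-d_2a_2=-x_2/|x_1|$, which is exactly the fourth line. Next, $d_1^*a_2=a_2d_1^*$ is literally $a_1^*a_2=a_2a_1^*$, the first relation of the second line of \eqref{abasic}.

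The remaining identity, $a_2^*d_2=d_2a_2^*+x_1d_2^*+|x_1|d_1$, is the one I expect to be the real content, because it mixes $a_2^*$ with $A$ and $A^*$. Expanding, $|x_1|(a_2^*d_2-d_2a_2^*)=[a_2^*,A]-x_1[a_2^*,a_1^*]$. The term $[a_2^*,a_1^*]$ is the adjoint of $[a_1,a_2]=-A$, so $[a_2^*,a_1^*]=a_2^*a_1^*-a_1^*a_2^*=-(a_1a_2-a_2a_1)^*=-(-A)^*=A^*$; wait — more carefully, $a_1a_2-a_2a_1=-A$ gives $(a_1a_2-a_2a_1)^*=a_2^*a_1^*-a_1^*a_2^*=-A^*$, hence $[a_2^*,a_1^*]=a_2^*a_1^*-a_1^*a_2^*=-A^*$. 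For $[a_2^*,A]=a_2^*A-Aa_2^*$: by the last relation of \eqref{abasic}, $a_2^*A=Aa_2^*$, so this commutator vanishes. Thus $|x_1|(a_2^*d_2-d_2a_2^*)=0-x_1(-A^*)=x_1A^*$. Now substitute $A^*=|x_1|d_2^*+\bar x_1 a_1 = |x_1|d_2^* + \bar x_1 d_1$ from $d_2^*=|x_1|^{-1}(A^*-\bar x_1 a_1)$; this gives $|x_1|(a_2^*d_2-d_2a_2^*)=x_1|x_1|d_2^*+x_1\bar x_1 d_1=x_1|x_1|d_2^*+|x_1|^2 d_1$, and dividing by $|x_1|$ produces $a_2^*d_2-d_2a_2^*=x_1d_2^*+|x_1|d_1$, as claimed.

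The main obstacle is purely bookkeeping: keeping the adjoints, signs, and the real scalar $|x_1|$ versus the complex $x_1,\bar x_1$ straight throughout — in particular correctly identifying $[a_2^*,a_1^*]$ with $\pm A^*$ and exploiting $a_2^*A=Aa_2^*$ to kill the term $[a_2^*,A]$. There is no structural difficulty; every step uses only the relations \eqref{abasic} and the definitions \eqref{c1c2c3}, and no positivity or representation-theoretic input is needed. Once all four identities are in hand, they feed into the subsequent verification that $d_1,d_2,d_3$ satisfy CCR with three degrees of freedom.
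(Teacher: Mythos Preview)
Your proof is correct and follows essentially the same route as the paper: both arguments verify the four identities directly from the definitions \eqref{c1c2c3} and the relations \eqref{abasic}, with the only nontrivial step being the third identity, where the paper likewise uses $a_2^*A=Aa_2^*$ together with $a_2^*a_1^*=a_1^*a_2^*-A^*$ and then substitutes $A^*=|x_1|d_2^*+\bar x_1 d_1$. The only difference is cosmetic --- you package things in commutator notation while the paper expands the products explicitly.
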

\begin{proof}
The first two relations follow directly from the definition of
$d_1$, $d_2$ and (\ref{abasic}). Further
\begin{align*}
|x_1|a_2d_2&=a_2A-x_1a_2a_1^*=Aa_2-x_2-x_1a_1^*a_2=\\
&=(Aa_2-x_1a_1^*)a_2-x_2=|x_1|d_2a_2-x_2,
\end{align*}
and
\begin{align*}
|x_1|a_2^*d_2&=a_2^*A-x_1a_2^*a_1^*=Aa_2^*-x_1(a_1^*a_2^*-A^*)=\\
&=(A-x_1a_1^*)a_2^*+x_1A^*=|x_1|d_2a_2^*+x_1(|x_1|d_2^*+\overline{x}_1d_1)=\\
&=|x_1|(d_2a_2^*+x_1d_2^*+|x_1|d_1).
\end{align*}
\end{proof}
\begin{lemma}\label{ax1x2toa3}
The elements $d_i$, $d_i^*$, $i=1,2,3$, generate $A_{x_1,x_2}$ and
satisfy CCR with three degrees of freedom, i.e. for any $i=1,2,3$
and $i\ne j$
\begin{equation}\label{cbasic}
d_i^*d_i-d_id_i^*=1,\quad d_i^*d_j=d_jd_i^*,\quad d_id_j=d_jd_i.
\end{equation}
\end{lemma}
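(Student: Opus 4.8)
The plan is to verify the CCR relations \eqref{cbasic} for $d_1,d_2,d_3$ directly, working from the basic relations \eqref{abasic} and the auxiliary commutation relations \eqref{adcomrel}. The relations involving only $d_1,d_2$ are essentially already contained in the previous lemma: from the definition $d_2=|x_1|^{-1}(A-x_1a_1^*)$ and the fact that $a_1^*A=Aa_1^*$, together with $a_1^*a_1-a_1a_1^*=1$, one computes $d_2^*d_2-d_2d_2^*$; the cross-relations $d_1^*d_2=d_2d_1^*$ and $d_1d_2=d_2d_1$ follow from $A a_1-a_1A=x_11$, $a_1^*A=Aa_1^*$ and $A^*a_1=\overline{x}_1 a_1 A^*+\ldots$ type identities (obtained by taking adjoints). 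I would isolate these as the ``easy'' part.

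The substantive work is to handle $d_3$. First I would record, as a preliminary computation, the commutation relations of $d_3$ with $d_1$ and with $d_2$. The defining formula for $d_3$ is an explicit polynomial in $a_2, d_1, d_1^*, d_2, d_2^*$, so using \eqref{adcomrel} (which tells us how $a_2$ moves past $d_1,d_1^*,d_2,d_2^*$) together with the CCR $d_1^*d_1-d_1d_1^*=1$, $d_2^*d_2-d_2d_2^*=1$ that we have just established, one pushes $d_1$ (resp.\ $d_2$) through each monomial of $d_3$. The point of the particular choice of coefficients $-\overline{x}_1/2,\ -|x_1|,\ -x_1/2$ and of the term $\frac{x_2}{|x_1|}d_2^*$ is precisely to cancel all the inhomogeneous and ``wrong'' terms that arise from $a_2d_1-d_1a_2=|x_1|d_2+x_1d_1^*$ and $a_2^*d_2=d_2a_2^*+x_1d_2^*+|x_1|d_1$, leaving $d_3d_1=d_1d_3$, $d_3d_2=d_2d_3$, and $d_3^*d_1=d_1d_3^*$, $d_3^*d_2=d_2d_3^*$. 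I expect this bookkeeping — keeping track of the quadratic terms $d_2^2,(d_1^*)^2, d_1^*d_2$ as $a_2$ or $a_2^*$ is commuted past them — to be the main obstacle; the half-integer coefficients are there exactly because commuting past a square produces a factor of $2$.

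Next I would compute $d_3^*d_3-d_3d_3^*$. Write $d_3=c\,Y$ with $c=(1+|x_2|^2/|x_1|^2)^{-1/2}$ and $Y=a_2+\frac{x_2}{|x_1|}d_2^*-\frac{\overline{x}_1}{2}d_2^2-|x_1|d_1^*d_2-\frac{x_1}{2}(d_1^*)^2$. Then $d_3^*d_3-d_3d_3^*=c^2(Y^*Y-YY^*)$, and expanding $Y^*Y-YY^*$ we get the term $a_2^*a_2-a_2a_2^*=1$ plus a collection of commutators of $a_2,a_2^*$ with the ``correction'' monomials and commutators among the correction monomials themselves; using \eqref{adcomrel} and the already-proven CCR for $d_1,d_2$ these collapse to a scalar. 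The scalar works out to $1+|x_2|^2/|x_1|^2=c^{-2}$, so $d_3^*d_3-d_3d_3^*=1$; this is the computation the normalising constant $c$ was designed to make come out right. Finally, to see that $d_1,d_2,d_3$ \emph{generate} $A_{x_1,x_2}$: from $d_1=a_1$ and $d_2=|x_1|^{-1}(A-x_1a_1^*)$ we recover $a_1$ and $A$ (using $x_1\ne 0$), hence $a_2=a_1d_2^*\cdot(\text{stuff})$\,---\,more precisely, the defining formula for $d_3$ can be solved for $a_2$ as a polynomial in $d_1,d_1^*,d_2,d_2^*,d_3$, so $a_2$ lies in the algebra generated by the $d_i$. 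This closes the argument.
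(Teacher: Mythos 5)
Your proposal is correct and follows essentially the same route as the paper: direct verification of the CCR for $d_1,d_2$ from \eqref{abasic}, commutation of $d_3$ with $d_1,d_2$ via the auxiliary relations \eqref{adcomrel}, the identity $d_3^*d_3-d_3d_3^*=1$ extracted from $a_2^*a_2-a_2a_2^*=1$ (your computation of $Y^*Y-YY^*=c^{-2}$ is the paper's computation rearranged), and generation by solving \eqref{c1c2c3} for $a_1$, $A$, $a_2$ in terms of the $d_i$. The only detail worth flagging is that the check $d_2^*d_2-d_2d_2^*=1$ also uses the normality $AA^*=A^*A$ (a consequence of $a_i^*A=Aa_i^*$, $i=1,2$), which your sketch of the ``easy part'' leaves implicit.
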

\begin{proof}
It  easily follows from  (\ref{c1c2c3}) that
\begin{align}\label{a1a2a3}
a_1&=d_1, \nonumber\\
A&=|x_1|d_2+x_1d_1^*,\\
a_2&=\Bigl(1+\frac{|x_2|^2}{|x_1|^2}\Bigr)^{\frac{1}{2}}d_3-
\frac{x_2}{|x_1|}d_2^*+
\frac{\overline{x}_1}{2}d_2^2+|x_1| d_1^*d_2+\frac{x_1}{2}(d_1^*)^2\nonumber
\end{align}
proving that $A_{x_1,x_2}$ is generated by $d_1$, $d_2$, $d_3$.

Further
\begin{align*}
|x_1|d_2d_1&=(A-x_1a_1^*)a_1=Aa_1-x_1(1+a_1a_1^*)=\\
&=a_1A+x_1-x_1-x_1a_1a_1^*=a_1(A-x_1a_1^*)=|x_1|d_1d_2
\end{align*}
\[
|x_1|d_1^*d_2=a_1^*(A-x_1a_1^*)=Aa_1^*-x_1(a_1^*)^2=(A-x_1a_1^*)a_1^*=
|x_1|d_2d_1^*
\]
Now let us check that $d_2^*d_2-d_2d_2^*=1$
\begin{align*}
|x_1|^2d_2^*d_2&=(A^*-\overline{x}_1a_1)(A-x_1a_1^*)=\\
&=A^*A-x_1A^*a_1^*-\overline{x}_1a_1A+|x_1|^2a_1a_1^*=\\
&=AA^*-x_1(a_1^*A^*-\overline{x}_1)-\overline{x}_1(Aa_1-x_1)
+|x_1|^2(a_1^*a_1-1)=\\
&=AA^*-x_1a_1^*A^*-\overline{x}_1Aa_1+|x_1|^2a_1^*a_1+|x_1|^2=\\
&=(A-x_1a_1^*)(A^*-\overline{x}_1a_1)+|x|_1^2=|x_1|^2(1+d_2d_2^*).
\end{align*}
Here we use the evident fact that $AA^*=A^*A$.

The relation $d_1^*d_3=d_3d_1^*$ follows immediately from the definition of $d_3$ and the commutation relations between $d_1^*$ and $d_2$, $d_2^*$. Using this commutation again as well as relations (\ref{adcomrel}) we get
\begin{align*}
\sqrt{1+\frac{|x_2|^2}{|x_1|^2}}(d_1d_3-d_3d_1)=&d_1a_2-a_2d_1+
|x_1|(d_1^*d_1-d_1d_1^*)d_2+\\
&+\frac{x_1}{2}((d_1^*)^2d_1-d_1(d_1^*)^2)=\\
=&-|x_1|d_2-x_1d_1^*+|x_1|d_2+\frac{x_1}{2}2d_1^*=0,
\end{align*}
\begin{align*}
\sqrt{1+\frac{|x_2|^2}{|x_1|^2}}(d_2^*d_3-d_3d_2^*)=&d_2^*a_2-a_2d_2^*-\\
&-\frac{\overline{x}_1}{2}(d_2^*d_2^2-d_2^2d_2^*)
-|x_1|(d_2^*d_2-d_2d_2^*)d_1^*=\\
&=\overline{x}_1d_2+|x_1|d_1^*-\frac{\overline{x}_1}{2}2d_2-|x_1|d_1^*=0
\end{align*}
and
\begin{align*}
\sqrt{1+\frac{|x_2|^2}{|x_1|^2}}(d_2d_3-d_3d_2)&=d_2a_2-a_2d_2+
\frac{x_2}{|x_1|}(d_2d_2^*-d_2^*d_2)=\\
&=\frac{x_2}{|x_1|}-\frac{x_2}{|x_1|}=0.
\end{align*}

Finally, since $d_3d_i=d_id_3$, $d_i^*d_3=d_3d_i^*$, $i=1,2$ one has
\begin{align*}
1=a_2^*a_2-a_2a_2^*&=\bigl(1+\frac{|x_2|^2}{|x_1|^2}\bigr)(d_3^*d_3-d_3d_3^*)-
\frac{|x_2|^2}{|x_1|^2}(d_2^*d_2-d_2d_2^*)+\\
&+\frac{|x_1|^2}{4}((d_2^*)^2d_2^2-d_2^2(d_2^*)^2)+
\frac{|x_1|^2}{4}(d_1^2(d_1^*)^2-(d_1^*)^2d_1^2)+\\
&+|x_1|^2(d_2^*d_2d_1d_1^*-d_1^*d_1d_2d_2^*)+\\
&+\frac{\overline{x}_1|x_1|}{2}d_1(d_2^*d_2^2-d_2^2d_2^*)+
\frac{\overline{x}_1|x_1|}{2}d_2(d_1^2d_1^*-d_1^*d_1^2)+\\
&+\frac{x_1|x_1|}{2}d_2^*(d_1(d_1^*)^2-(d_1^*)^2d_1)+
\frac{x_1|x_1|}{2}d_1^*((d_2^*)^2d_2-d_2(d_2^*)^2)=\\
&=\bigl(1+\frac{|x_2|^2}{|x_1|^2}\bigr)(d_3^*d_3-d_3d_3^*)-
\frac{|x_2|^2}{|x_1|^2}+\\
&+\frac{|x_1|^2}{4}(2+4d_2d_2^*)-
\frac{|x_1|^2}{4}(2+4d_1d_1^*)+|x_1|^2(d_1d_1^*-d_2d_2^*)+\\
&+\frac{\overline{x_1}|x_1|}{2}2d_1d_2-\frac{\overline{x_1}|x_1|}{2}2d_2d_1
-\frac{x_1|x_1|}{2}2d_2^*d_1^*+\frac{x_1|x_1|}{2}2d_1^*d_2^*=\\
&=\bigl(1+\frac{|x_2|^2}{|x_1|^2}\bigr)(d_3^*d_3-d_3d_3^*)-
\frac{|x_2|^2}{|x_1|^2}
\end{align*}
showing that $d_3^*d_3-d_3d_3^*=1$.
\end{proof}

Denote by $A_3$ the $*$-algebra generated by CCR with $3$ degrees of
freedom and denote by $c_1$, $c_2$, $c_3$ the canonical generators
of $A_3$. Construct  elements $b_1$, $b_2$, $B$ of $A_3$ using
formulas (\ref{a1a2a3}).
\begin{lemma}\label{a3toax1x2}
The elements $b_1$, $b_2$, $B$ satisfy (\ref{abasic}) and generate
$A_3$.
\end{lemma}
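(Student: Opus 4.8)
The statement is essentially the converse of Lemma~\ref{ax1x2toa3}: there we started from $A_{x_1,x_2}$, built $d_1,d_2,d_3$ via~(\ref{c1c2c3}), and checked they satisfy CCR; now we start from $A_3$ with canonical generators $c_1,c_2,c_3$, define $b_1,b_2,B$ by~(\ref{a1a2a3}) (with $c_i$ in place of $d_i$), and must verify the defining relations~(\ref{abasic}) of $A_{x_1,x_2}$ together with the generation claim. The plan is to do this by direct substitution, exploiting the CCR for $c_1,c_2,c_3$, in the same spirit as the computations already carried out in Lemma~\ref{ax1x2toa3} but run in reverse.

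First I would record $b_1=c_1$, $B=|x_1|c_2+x_1c_1^*$, and
\[
b_2=\Bigl(1+\tfrac{|x_2|^2}{|x_1|^2}\Bigr)^{\frac12}c_3-\tfrac{x_2}{|x_1|}c_2^*+\tfrac{\overline{x}_1}{2}c_2^2+|x_1|c_1^*c_2+\tfrac{x_1}{2}(c_1^*)^2,
\]
and then check the relations of~(\ref{abasic}) one at a time. The relation $b_1^*b_1-b_1b_1^*=1$ is just the CCR for $c_1$. The relation $b_1^*b_2=b_2b_1^*$ follows because $c_1^*$ commutes with $c_3,c_3^*$ and with $c_2,c_2^*$, and $(c_1^*)(c_1^*c_2)=(c_1^*c_2)(c_1^*)$, $(c_1^*)(c_1^*)^2=(c_1^*)^2(c_1^*)$ trivially — so every summand of $b_2$ commutes with $c_1^*$. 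For $B=b_2b_1-b_1b_2$: substitute and use that $c_1$ commutes with $c_2,c_3$ and $c_1^*$-terms, so the only noncommuting pieces come from $c_1^*$ against $c_1$; a short computation, mirroring the $|x_1|d_2d_1=|x_1|d_1d_2$ and $d_1d_3=d_3d_1$ identities in Lemma~\ref{ax1x2toa3}, should collapse $b_2b_1-b_1b_2$ to exactly $|x_1|c_2+x_1c_1^*$, i.e.\ to $B$. Next, $Bb_i-b_iB=x_i1$ and $b_i^*B=Bb_i^*$: since $B=|x_1|c_2+x_1c_1^*$, the identity $b_1^*B=Bb_1^*$ is $c_1^*(|x_1|c_2+x_1c_1^*)=(|x_1|c_2+x_1c_1^*)c_1^*$, immediate; $Bb_1-b_1B=|x_1|(c_2c_1-c_1c_2)+x_1(c_1^*c_1-c_1c_1^*)=x_1\cdot1$, using CCR; the two relations involving $b_2$ are obtained by substituting the $b_2$-formula and cancelling, again paralleling the reverse computations in Lemma~\ref{ax1x2toa3} (the $d_2^*d_3=d_3d_2^*$, $d_2d_3=d_3d_2$ and $a_2^*A=Aa_2^*$ checks there). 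Finally $AA^*=A^*A$ holds automatically for $A=|x_1|c_2+x_1c_1^*$ because $c_2,c_2^*$ commute with $c_1,c_1^*$ and $(|x_1|c_2+x_1c_1^*)(|x_1|c_2^*+\overline{x}_1c_1)$ is symmetric in the obvious way — actually one checks $A^*A-AA^*=|x_1|^2(c_2^*c_2-c_2c_2^*)+|x_1|^2(c_1c_1^*-c_1^*c_1)=|x_1|^2-|x_1|^2=0$.

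For the generation claim: from~(\ref{a1a2a3}) one solves back for $c_1,c_2,c_3$ in terms of $b_1,b_2,B$ exactly as~(\ref{c1c2c3}) expresses $d_1,d_2,d_3$ in terms of $a_1,a_2,A$ — namely $c_1=b_1$, $c_2=|x_1|^{-1}(B-x_1b_1^*)$, and $c_3$ as the analogue of the third line of~(\ref{c1c2c3}). Since $c_1,c_2,c_3$ generate $A_3$ and each lies in the $*$-subalgebra generated by $b_1,b_2,B$, the elements $b_1,b_2,B$ generate $A_3$. Combining the two lemmas then yields $A_{x_1,x_2}\simeq A_3$, the desired isomorphism.

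\textbf{Main obstacle.}
No conceptual difficulty is expected; the proof is entirely computational. The one place demanding care is the verification that $Bb_2-b_2B=x_21$ and $b_2^*B=Bb_2^*$, since these require expanding the full nine-term expression for $b_2$ and tracking the quadratic $c_1^*$- and $c_2$-monomials through the CCR — the same bookkeeping that made the final displayed computation in Lemma~\ref{ax1x2toa3} lengthy. In practice one should not redo this from scratch but observe that the substitutions~(\ref{c1c2c3}) and~(\ref{a1a2a3}) are mutually inverse by construction, so the relations of $A_{x_1,x_2}$ for $b_1,b_2,B$ are equivalent to the CCR for $c_1,c_2,c_3$ already established in Lemma~\ref{ax1x2toa3}; this reduces the present lemma to a formal check that the two change-of-variable maps compose to the identity, which is routine.
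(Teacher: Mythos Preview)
Your direct-computation plan matches the paper's approach and is correct in outline, but two points deserve attention.

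First, you never list the relation $b_2^*b_2-b_2b_2^*=1$ among those to be checked, and you single out $Bb_2-b_2B=x_2\mathbf 1$ and $b_2^*B=Bb_2^*$ as the demanding ones. In fact both of the latter are short (only the $c_2^*$-term of $b_2$ fails to commute with $B$, and the commutator collapses in two lines; the relations $b_i^*B=Bb_i^*$ are even redundant, being consequences of the remaining ones). The genuinely long computation is $b_2^*b_2-b_2b_2^*=1$. The paper does not redo it: it observes that the final displayed computation in Lemma~\ref{ax1x2toa3} already expresses $a_2^*a_2-a_2a_2^*$ entirely in the $d$-variables and reduces it to $(1+|x_2|^2/|x_1|^2)(d_3^*d_3-d_3d_3^*)-|x_2|^2/|x_1|^2$ using only CCR relations among $d_1,d_2$ and the mixed $d_3$-relations. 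That identity lives in the free $*$-algebra on $d_1,d_2,d_3$ modulo relations that hold in $A_3$, so reading it with $c_i$ in place of $d_i$ and $b_2$ in place of $a_2$ gives $b_2^*b_2-b_2b_2^*=1$ directly.

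Second, your proposed shortcut --- that since~(\ref{c1c2c3}) and~(\ref{a1a2a3}) are mutually inverse substitutions, the relations~(\ref{abasic}) for $b_1,b_2,B$ are \emph{equivalent} to the CCR for $c_1,c_2,c_3$, so Lemma~\ref{ax1x2toa3} already does the work --- is not valid as stated. If $\sigma,\tau$ are mutually inverse $*$-automorphisms of the free $*$-algebra and $R_a,R_c$ are the two relation sets, Lemma~\ref{ax1x2toa3} gives $\sigma(\langle R_c\rangle)\subset\langle R_a\rangle$; the present lemma asks for $\tau(\langle R_a\rangle)\subset\langle R_c\rangle$, and neither inclusion implies the other. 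What \emph{is} legitimate is the targeted reuse described above: a specific identity proved in Lemma~\ref{ax1x2toa3} that only invokes relations already known to hold in $A_3$ may be transported. Your generation argument and the remaining relation checks are fine.
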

\begin{proof}
It is evident that one can express $c_i$, $i=1,2,3$ via $b_1$,
$b_2$, $B$ using (\ref{c1c2c3}) with $b_1$, $b_2$, $B$ instead of
$a_1$, $a_2$, $A$. So $A_3$ is generated by $b_1$, $b_2$, $b_3$.

Let us show that $b_1$, $b_2$, $B$ satisfy (\ref{abasic}).

Indeed, it is a moment of reflection to see that $b_1^*b_2=b_2b_1^*$
and $b_1^*b_1-b_1b_1^*=1$. Further
\begin{align*}
b_2b_1-b_1b_2&=|x_1|c_1^*c_2c_1+\frac{x_1}{2}(c_1^*)^2c_1-|x_1|c_1c_1^*c_2-
\frac{x_1}{2}c_1(c_1^*)^2=\\
&=|x_1|(c_1^*c_1-c_1c_1^*)c_2+\frac{x_1}{2}((c_1*)^2c_1-c_1(c_1^*)^2)=\\
&=|x_1|c_2+\frac{x_1}{2}2c_1^*=|x_1|c_2+x_1c_1^*=B,
\end{align*}
\begin{align*}
Bb_1&=|x_1|c_2c_1+x_1c_1^*c_1=|x_1|c_2c_1+x_1(1+c_1c_1^*)=\\
&=|x_1|c_1c_2+x_1c_1c_1^*+x_1=c_1(|x_1|c_2+x_1c_1^*)+x_1=b_1B+x_1,
\end{align*}
and
\[
Bb_2-b_2B=-\frac{x_2}{|x_1|}|x_1|c_2c_2^*+\frac{x_2}{|x_1|}x_1c_2^*c_2=
x_2(c_2^*c_2-c_2c_2^*)=x_2.
\]

Thus it remains to check that $b_2^*b_2-b_2b_2^*=1$. But  in fact this
was done in Lemma \ref{ax1x2toa3}, when we checked  that the
relation $d_3^*d_3-d_3d_3^*=1$ is satisfied.
\end{proof}

Using Lemma \ref{ax1x2toa3} and Lemma \ref{a3toax1x2} it is easy to
see that the $*$-algebras $A_{x_1,x_2}$ and $A_3$ are isomorphic.
\begin{proposition}
The $*$-algebra $A_{x_1,x_2}$ is isomorphic to the $*$-algebra $A_3$.
\end{proposition}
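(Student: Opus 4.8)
The plan is to assemble the isomorphism directly from the two lemmas that precede the statement, exhibiting mutually inverse $*$-homomorphisms and invoking the fact that a $*$-homomorphism between $*$-algebras which have explicit sets of generators is determined by where it sends those generators. Concretely, Lemma~\ref{ax1x2toa3} provides the formulas \eqref{c1c2c3} expressing elements $d_1,d_2,d_3\in A_{x_1,x_2}$ which satisfy the defining relations \eqref{cbasic} of $A_3$ and which generate $A_{x_1,x_2}$; Lemma~\ref{a3toax1x2} provides the formulas \eqref{a1a2a3} (equivalently, \eqref{c1c2c3} read backwards) expressing elements $b_1,b_2,B\in A_3$ which satisfy the defining relations \eqref{abasic} of $A_{x_1,x_2}$ and which generate $A_3$. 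Since $A_3$ is the universal $*$-algebra on generators $c_1,c_2,c_3$ subject to \eqref{cbasic}, the map $c_i\mapsto d_i$ extends (uniquely) to a $*$-homomorphism $\varphi\colon A_3\to A_{x_1,x_2}$; by the same universality for $A_{x_1,x_2}$ on generators $a_1,a_2,A$ subject to \eqref{abasic}, the map $a_1\mapsto b_1$, $a_2\mapsto b_2$, $A\mapsto B$ extends to a $*$-homomorphism $\psi\colon A_{x_1,x_2}\to A_3$.

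It then remains to check that $\varphi$ and $\psi$ are mutually inverse, and for this it suffices to verify the composition on generators. Applying $\psi$ to the definitions \eqref{c1c2c3} of $d_1,d_2,d_3$ and then using that $\psi(a_1)=b_1=c_1$, $\psi(a_2)=b_2$, $\psi(A)=B$ together with \eqref{a1a2a3}, one finds $\psi(d_1)=c_1$, $\psi(d_2)=c_2$, $\psi(d_3)=c_3$; hence $\varphi\circ\psi=\mathrm{id}$ on a generating set of $A_{x_1,x_2}$ and so on all of $A_{x_1,x_2}$. Symmetrically, applying $\varphi$ to \eqref{a1a2a3} and using $\varphi(c_i)=d_i$ together with \eqref{c1c2c3} gives $\varphi(b_1)=a_1$, $\varphi(b_2)=a_2$, $\varphi(B)=A$, so $\psi\circ\varphi=\mathrm{id}$ on $A_3$. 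Therefore $\varphi$ is a $*$-isomorphism with inverse $\psi$, which is the assertion.

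The only genuinely delicate point — and the one I expect to be the main obstacle — is the well-definedness of the two maps, i.e.\ that the proposed images really satisfy \emph{all} the defining relations of the target algebra, not merely generate it. But this is precisely the content of Lemmas~\ref{ax1x2toa3} and~\ref{a3toax1x2}: the former verifies that the $d_i$ fulfil the CCR \eqref{cbasic} (the self-adjointness-type relation $d_3^*d_3-d_3d_3^*=1$ being the hardest computation there), and the latter verifies that $b_1,b_2,B$ fulfil \eqref{abasic}, with $b_2^*b_2-b_2b_2^*=1$ already absorbed into the computation in Lemma~\ref{ax1x2toa3}. Since those checks are done, the present proposition is a short formality: invoke universality to get $\varphi$ and $\psi$, then confront their compositions on generators via \eqref{c1c2c3} and \eqref{a1a2a3} to conclude $\varphi\circ\psi=\mathrm{id}$ and $\psi\circ\varphi=\mathrm{id}$. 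One should also note in passing that the construction requires $x_1\ne 0$ (the formulas \eqref{c1c2c3}, \eqref{a1a2a3} divide by $|x_1|$), consistent with the standing assumption of this subsection.
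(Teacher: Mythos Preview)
Your proof is correct and follows essentially the same approach as the paper: define the two $*$-homomorphisms by universality using Lemmas~\ref{ax1x2toa3} and~\ref{a3toax1x2}, then check that their compositions fix the respective generating sets. The only difference is cosmetic---your $\varphi$ and $\psi$ are the paper's $\psi$ and $\phi$, respectively---and you spell out a bit more explicitly why the compositions are identities on generators, which the paper leaves implicit.
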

\begin{proof}

Let  $\phi\colon A_{x_1,x_2}\rightarrow A_3$ be a homomorphism
defined by
\[
\phi(a_i)=b_i,\ i=1,2,\quad \phi(A)=B,
\]
where $b_1$, $b_2$ and $B$ are the generators constructed in Lemma
\ref{a3toax1x2}. Similarily define $\psi\colon A_3\rightarrow
A_{x_1,x_2}$ by
\[
\psi(c_i)=d_i,\quad i=1,2,3,
\]
where $d_i$ are taken from Lemma \ref{ax1x2toa3}.

Then  $\psi\circ\phi=\mathrm{id}_{A_{x_1,x_2}}$ and
$\phi\circ\psi=\mathrm{id}_{A_3}$.
\end{proof}

Therefore in order to study irreducible representations of $A_{x_1,x_2}$
we can work with the generators $d_1,d_2,d_3$. As for the case of
representations annihilating $\mathcal{K}_3$, we say that
a representation of $A_{x_1,x_2}$ with $x_1\ne 0$ is {\it
well-behaved} if the corresponding representation of $A_3\simeq
A_{x_1,x_2}$ is well-behaved. Then from the uniqueness of irreducible
well-behaved $*$-representation of CCR with finite degrees of freedom
we get that the space of representation is
$\mathcal{H}=l_2(\mathbb{Z}_{+})^{\otimes 3}$ and
\[
d_1=a\otimes \mathbf1\otimes \mathbf1,\ d_2=\mathbf1\otimes a\otimes \mathbf1,\ d_3=\mathbf1\otimes \mathbf1\otimes a.
\]
Returinig to the generators $a_1$, $a_2$, $a_3$ using  (\ref{c1c2c3}) we get the
following result.
\begin{theorem}
For any $(x_1,x_2)\in\mathbb{C}^2$ with $x_1\ne 0$ there exists a
unique, up to a unitary equivalence, well-behaved irreducible
representation of $A_{x_1,x_2}$ defined on the generators by the
following formulas
\begin{align*}
a_1=&a\otimes \mathbf1\otimes \mathbf1,\\
a_2=&\sqrt{1+\frac{|x_2|^2}{|x_1|^2}}\ \mathbf1\otimes \mathbf1\otimes a-
\frac{x_2}{|x_1|}\mathbf1\otimes a^*\otimes \mathbf1+\frac{\overline{x}_1}{2}\mathbf1\otimes a^2\otimes \mathbf1+\\
&\hskip 3,9cm+|x_1|a^*\otimes a\otimes \mathbf1+\frac{x_1}{2}(a^*)^2\otimes \mathbf1\otimes \mathbf1,\\
A=&|x_1|\mathbf1\otimes a\otimes \mathbf1+x_1a^*\otimes \mathbf1\otimes \mathbf1.
\end{align*}
\end{theorem}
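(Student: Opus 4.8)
The plan is to deduce the theorem from the isomorphism $A_{x_1,x_2}\simeq A_3$ established in the preceding proposition together with the Stone--von Neumann uniqueness theorem. By construction, a representation of $A_{x_1,x_2}$ (with $x_1\ne 0$) is \emph{well-behaved} exactly when the corresponding representation of $A_3$ under the isomorphism $\phi$ is well-behaved, and an algebra isomorphism carries irreducible representations to irreducible representations and preserves unitary equivalence. Hence classifying irreducible well-behaved representations of $A_{x_1,x_2}$ amounts to classifying those of $A_3$ and then transporting them back through the formulas (\ref{a1a2a3}), which express $a_1,a_2,A$ in terms of the canonical generators $d_1,d_2,d_3$.

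Next I would invoke the uniqueness, up to unitary equivalence, of the irreducible well-behaved (integrable) representation of CCR with three degrees of freedom: it is the Fock representation on $l_2(\mathbb{Z}_{+})^{\otimes 3}$ with $d_1=a\otimes\mathbf1\otimes\mathbf1$, $d_2=\mathbf1\otimes a\otimes\mathbf1$, $d_3=\mathbf1\otimes\mathbf1\otimes a$, $a e_n=\sqrt{n+1}\,e_{n+1}$. Existence of the desired representation of $A_{x_1,x_2}$ then follows by pulling this representation back along $\phi$ (equivalently, by reading off $a_1,a_2,A$ from (\ref{a1a2a3})); uniqueness follows since any irreducible well-behaved representation of $A_{x_1,x_2}$ gives, via $\phi$, an irreducible well-behaved representation of $A_3$, which must be unitarily equivalent to the Fock one, and the equivalence descends.

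It remains only to substitute $d_1=a\otimes\mathbf1\otimes\mathbf1$, $d_2=\mathbf1\otimes a\otimes\mathbf1$, $d_3=\mathbf1\otimes\mathbf1\otimes a$ into (\ref{a1a2a3}). Using $d_1^*=a^*\otimes\mathbf1\otimes\mathbf1$, $d_2^*=\mathbf1\otimes a^*\otimes\mathbf1$, $(d_1^*)^2=(a^*)^2\otimes\mathbf1\otimes\mathbf1$, $d_2^2=\mathbf1\otimes a^2\otimes\mathbf1$, $d_1^*d_2=a^*\otimes a\otimes\mathbf1$, the element $A=|x_1|d_2+x_1 d_1^*$ becomes $|x_1|\,\mathbf1\otimes a\otimes\mathbf1+x_1\,a^*\otimes\mathbf1\otimes\mathbf1$, the identity $a_1=d_1$ is immediate, and the expression for $a_2$ collects into precisely the stated five-term formula. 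There is no real obstacle here beyond this bookkeeping; the only genuine input is Stone--von Neumann, and the hypothesis $x_1\ne 0$ is used solely to make the change of generators (\ref{c1c2c3})--(\ref{a1a2a3}) meaningful.
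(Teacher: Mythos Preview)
Your proposal is correct and follows essentially the same route as the paper: transport the problem through the isomorphism $A_{x_1,x_2}\simeq A_3$, apply Stone--von Neumann uniqueness for the three-dimensional CCR to obtain the Fock realization of $d_1,d_2,d_3$, and then substitute into (\ref{a1a2a3}) to read off the explicit operators $a_1,a_2,A$. The paper's argument differs only in phrasing, not in substance.
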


\subsubsection{Representations with $x_1=0$}
Let  $x_1=0$ and $x_2\ne 0$. As in the previous case we have $A_{0,x_2}\simeq A_3$. To see this we express the generators $a_1$, $a_2$, $a_3$ via the generators $d_1$, $d_2$ and $d_3$ of CCR using formulas (\ref{a1a2a3}) with $a_2$, $-a_1$  instead of $a_1$,  $a_2$ respectively, exchanging  $x_1$ with $x_2$ and letting then $x_1=0$. For this we observe that  $(-a_1)a_2-a_2(-a_1)=A$, and
$Aa_2-a_2A=x_2$.
Hence we get the following result.
\begin{theorem}
For any $x_2\in\mathbb{C}$, $x_2\ne 0$, there exists a unique, up to
a unitary equivalence, irreducible well-behaved $*$-representation
of $A_{0,x_2}$, defined by the following formulas
\begin{align*}
a_2&=a\otimes \mathbf1\otimes \mathbf1,\\
a_1&=-\bigl(\mathbf1\otimes \mathbf1\otimes a
+\frac{\overline{x}_2}{2}\mathbf1\otimes a^2\otimes \mathbf1+\\
&\hskip 3,9cm+|x_2|a^*\otimes a\otimes \mathbf1+\frac{x_2}{2}(a^*)^2\otimes \mathbf1\otimes \mathbf1\bigr)\\
A&=|x_2|\mathbf1\otimes a\otimes \mathbf1+x_2a^*\otimes \mathbf1\otimes \mathbf1
\end{align*}
\end{theorem}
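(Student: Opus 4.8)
The plan is to reduce the case $x_1=0$, $x_2\ne 0$ to the already-settled case with non-zero first parameter by an elementary change of generators, and then to push the explicit Fock-type formulas through the resulting chain of $*$-isomorphisms.

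First I would check that in $A_{0,x_2}$ the elements $b_1=a_2$, $b_2=-a_1$, $B=A$ satisfy the defining relations (\ref{abasic}) of $A_{x_2,0}$ (i.e.\ the algebra $A_{x_1,x_2}$ with the substitution $x_1\mapsto x_2$, $x_2\mapsto 0$). Indeed $b_i^*b_i-b_ib_i^*=1$ is the CCR, $b_1^*b_2=b_2b_1^*$ is the adjoint of $a_1^*a_2=a_2a_1^*$, and $b_2b_1-b_1b_2=(-a_1)a_2-a_2(-a_1)=a_2a_1-a_1a_2=A=B$; next $Bb_1-b_1B=Aa_2-a_2A=x_2\mathbf{1}$, $Bb_2-b_2B=-(Aa_1-a_1A)=0$, and $b_i^*B=Bb_i^*$ follows from $a_i^*A=Aa_i^*$. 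Conversely $a_1=-b_2$, $a_2=b_1$, $A=B$, so the assignment is a $*$-isomorphism $A_{0,x_2}\simeq A_{x_2,0}$. Since the first parameter $x_2$ is non-zero, the preceding Proposition applies and gives $A_{x_2,0}\simeq A_3$; composing, one obtains generators $d_1,d_2,d_3$ of $A_{0,x_2}$ given by (\ref{c1c2c3}) under the substitutions $a_1\mapsto a_2$, $a_2\mapsto -a_1$, $A\mapsto A$, $x_1\mapsto x_2$, $x_2\mapsto 0$, and conversely $a_2=d_1$, $A=|x_2|d_2+x_2d_1^*$, with $a_1$ equal to minus the right-hand side of the last line of (\ref{a1a2a3}) after the same substitutions (the extra sign coming from $a_1=-b_2$).

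I would then declare a representation of $A_{0,x_2}$ to be well-behaved when the corresponding representation of $A_3$ is well-behaved, and invoke the uniqueness up to unitary equivalence of the irreducible well-behaved representation of CCR with three degrees of freedom: it realises $d_1,d_2,d_3$ on $l_2(\mathbb{Z}_+)^{\otimes 3}$ as $a\otimes\mathbf{1}\otimes\mathbf{1}$, $\mathbf{1}\otimes a\otimes\mathbf{1}$, $\mathbf{1}\otimes\mathbf{1}\otimes a$. Substituting these back into the expressions for $a_1$, $a_2$, $A$ and using that the coefficient $|x_2|^2/|x_1|^2$ becomes $0$ after the exchange of parameters, one reads off precisely the formulas in the statement.

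There is no serious obstacle here: the whole argument is the $x_1\ne 0$ case transcribed through the flip $(a_1,a_2)\leftrightarrow(a_2,-a_1)$, $A\mapsto A$. The only points requiring attention are the bookkeeping — verifying that this flip is compatible with the $*$-operation (so that $a_1^*a_2=a_2a_1^*$ and $a_i^*A=Aa_i^*$ survive and the sign in $b_2=-a_1$ does not spoil the relation $Bb_2-b_2B=0$) and keeping track of the overall minus sign when one inverts (\ref{c1c2c3})/(\ref{a1a2a3}), so that it ends up in front of the entire bracket defining $a_1$.
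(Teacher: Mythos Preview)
Your proposal is correct and follows essentially the same approach as the paper: the paper's argument is the single observation that replacing $a_1,a_2$ by $a_2,-a_1$ (and hence $x_1,x_2$ by $x_2,0$) reduces $A_{0,x_2}$ to the already-treated case with non-zero first parameter, after which one substitutes into (\ref{a1a2a3}). Your write-up in fact supplies more of the bookkeeping (the verification that $b_1=a_2$, $b_2=-a_1$, $B=A$ satisfy the relations of $A_{x_2,0}$, the handling of the sign, and the simplification of the coefficients under $x_2\mapsto 0$) than the paper does explicitly.
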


If both $x_1=0$ and $x_2=0$, then $Aa_i=a_iA$, $i=1,2$ and hence  the cubic ideal $\mathcal{K}_3$ is annihilated. In this case irreducible well-behaved represenations are described in Theorem \ref{repcubid}.

\subsection{Concluding remarks}
Note that  our result shows in particular that in the case of $\mathcal{A}_2^0$ the ideal $\mathcal{K}_4$ does
not coincide with $\mathcal{I}_4$, the largest homogeneous ideal of degree $4$. Indeed, as noted above
$\mathcal{K}_2\otimes\mathcal{K}_2\subset\mathcal{I}_4$. So, if a representation $\pi$ annihilates
$\mathcal{I}_4$, then $\pi(A^2)=0$. Since $A$ is a normal element we immediately have $\pi(A)=0$.
However the representation that we constructed above has the property that $\pi(\mathcal{K}_4)=\{0\}$
but $\pi(A)\ne 0$. Thus $\mathcal{K}_4\ne\mathcal{I}_4$.

\section*{Acknowledgements} The work on this paper was supported by DFG grant SCHM1009/4-1. The paper was initiated during the visit of V. Ostrovskyi, D. Proskurin and L.~Turowska to Leipzig University, the warm hospitality and stimulating atmosphere are gratefully acknowledged. We are also indebdet deeply to D.~Neiter for performing computations in {\sc Mathematica}.

\end{document}